\theoremstyle{plain}
\newtheorem{thm}{Theorem}[section]
\newtheorem{lem}[thm]{Lemma}
\newtheorem{cor}[thm]{Corollary}
\theoremstyle{definition}
\newtheorem{df}[thm]{Definition}
\newtheorem{ex}[thm]{Example}
\newtheorem{ex-notn}[thm]{Example/Notation}
\newtheorem{conj}[thm]{Conjecture}
\newtheorem{rem}[thm]{Remark}
\def\im{\operatorname{im}}
\def\CC{{\mathbb C}}     
\def\HH{{\mathbb H}}
\def\PP{{\mathbb P}}     
\def\QQ{{\mathbb Q}}    
\def\ZZ{{\mathbb Z}}
\def\D{{\Delta}}
\def\calA{{\mathcal A}}
\def\calC{{\mathcal C}}
\def\calE{{\mathcal E}}
\def\calH{{\mathcal H}}
\def\calI{{\mathcal I}}
\def\calO{{\mathcal O}}
\def\calV{{\mathcal V}}
\def\bar#1{\overline{#1}}
\def\ra{\rightarrow}
\def\into{\hookrightarrow}
\def\Mtwo{{\em Macaulay} 2\expandafter}
\def\Wedge{{\mathop \wedge}}
\numberwithin{equation}{section}
\begin{document}

\title{On the cohomology of a simple normal crossings divisor}
\author{Parsa Bakhtary}
\address{Department of Mathematics,
Purdue University,
150 North University Street,
West Lafayette, IN  47907-2067}
\begin{abstract}
We establish a formula which decomposes the cohomologies of various sheaves on a simple normal crossings divisor (SNC) 
$D$ in terms of the simplicial cohomologies of the dual complex $\D(D)$ with coefficients in a presheaf of vector spaces.  This presheaf consists precisely of the corresponding cohomology data on the components of $D$ and on their intersections. We use this formula to give a Hodge decomposition for SNC divisors and investigate the toric setting.  We also conjecture the existence of such a formula for 
effective non-reduced divisors with SNC support, and show that this would imply the vanishing of the higher simplicial cohomologies of the dual complex associated to a resolution of an isolated rational singularity.  
\end{abstract}
\maketitle

\section{Introduction}
The purpose of this paper is to establish and exploit a connection between the simplicial cohomology of topology and the Zariski sheaf 
cohomology of algebraic geometry, in order to understand the cohomological behavior of certain sheaves on a simple normal crossings 
(SNC) divisor.  To a simple normal crossings divisor $D=\sum D_i$ on a smooth variety, that is one such that each component $D_i$ is smooth and each $D_{i_1} \cap ... \cap D_{i_t}$ is a smooth transverse intersection, one may associate a dual CW complex $\D(D)$ using only 
incidence information, and under the assumption that each $D_{i_0}\cap ... \cap D_{i_p}$ is irreducible, $\D(D)$ is actually a simplicial complex.  Just think of the prime components $D_i$ as the vertices, non-empty 2-fold intersections $D_i \cap D_j$ as the edges, 
non-empty 3-fold intersections $D_i \cap D_j \cap D_k$ as the 2-faces, and so on.  This simplicial complex, first studied by G. L. Gordon \cite{G1}, is absent of any algebrao-geometric structure yet carries with it important skeletal information.

In a series of recent papers \cite{S1}, \cite{S2}, \cite{S3},
D. A. Stepanov investigated the homotopy type of the dual complex $\D(D)$ associated to a resolution of an isolated singularity (one 
exists by Hironaka's resolution theorem \cite{hiro}), which 
he proves is independent of the choice of resolution (using the weak factorization theorem of birational maps, proven by W{\l}odarczyk 
in \cite{W1} and \cite{W2}, and by Abramovich-Karu-Matsuki-W{\l}odarczyk in \cite{akmw}) and is therefore an invariant of the 
singularity.  It is known that the homotopy type of a rational surface singularity is trivial \cite{art}, 
and in this spirit Stepanov shows that $\D(D)$ is contractible, i.e. homotopic to a point, for isolated toric singularities, 
$3$-dimensional isolated rational hypersurface singularities, $3$-dimensional terminal singularities, and Brieskorn singularities.  He asks if the homotopy type of the dual complex associated to an isolated rational singularity is trivial in higher dimensions, and we study the connection between the simplicial cohomologies of $\D(D)$ and $H^i(D,\calO_D)$ with this conjecture in mind.

In fact, using a completely different approach involving Berkovich's analytic spaces, A. Thuillier has shown \cite{th}  more generally that given any ideal $\calI \subset \calO_X$ defining a closed subscheme $Z$ of a scheme $X$ over a perfect field 
and a proper map $f:X' \ra X$, with $f^{-1}Z=D$ a normal crossings divisor on $X'$ regular, restricting to an isomorphism 
$X'-D \cong X-Z$, then the homotopy type of the incidence complex $\D(D)$ depends only on $\calI$ and $X$.  In view of this result, given an arbitrary subscheme $Z \subset X$, the homotopy type of the exceptional divisor after taking an embedded resolution is independent of the choice of the resolution, and is an invariant the pair $(X,Z)$.

If $D=\sum D_i$ is a simple normal crossings (SNC) divisor on a compact K\"ahler manifold $X$ with each $D_{i_0...i_p}:=D_{i_0}\cap ... \cap D_{i_p}$ irreducible, then let $\D(D)$ denote the associated dual simplicial complex.  Also let $\tilde{\Omega}^r_D$ denote the  
sheaf of reduced holomorphic $r$-forms on $D$, i.e. 
\[\tilde{\Omega}^r_D=\{(\alpha_i) \in \bigoplus_i \Omega^r_{D_i} : {\alpha_i}_{|D_{ij}}={\alpha_j}_{|D_{ij}} , \forall i<j \}  \]
and we have that $\tilde{\Omega}^0_D \cong \calO_D$.  These forms are called reduced because they are precisely the K\"ahler $r$-forms on $D$ modulo torsion, i.e. modulo forms supported on the singular locus of $D$.  Now let $\calH_{dR}^q(\CC)$, $\calH^q(\Omega^r)$, and 
$\calH^q(\calO)$ denote the presheaves on $\D$ which assign to each $\D_{i_0...i_p}$ the vector space $H^q_{dR}(D_{i_0...i_p},\CC)$ (deRham cohomology), $H^q(D_{i_0...i_p},\Omega^r_{D_{i_0...i_p}})$, and $H^q(D_{i_0...i_p},\calO_{D_{i_0...i_p}})$ respectively.  

\begin{thm}
Suppose $X$ is a K\"ahler manifold (e.g. a smooth projective variety $/\CC$) and let $D=\sum D_i$ be a reduced SNC divisor on $X$ with 
each $D_{i_0}\cap ... \cap D_{i_p}$ irreducible, and denote by $\D=\D(D)$ the dual simplicial complex.  Then we have the following 
isomorphisms:

\[ H^i_{dR}(D,\CC) \cong \bigoplus_{p+q=i} H^p(\D,\calH^q_{dR}(\CC)) \]

\[ H^i(D,\tilde{\Omega}^r_D) \cong \bigoplus_{p+q=i} H^p(\D,\calH^q(\Omega^r)).\]

In particular, for $r=0$, we have

\[ H^i(D,\calO_D) \cong \bigoplus_{p+q=i} H^p(\D,\calH^q(\calO)).\]
\end{thm}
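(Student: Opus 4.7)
The plan is to use a \v{C}ech/Mayer--Vietoris resolution of each sheaf $\calF$ of interest on $D$, run the resulting hypercohomology spectral sequence, identify its $E_1$-page with the simplicial cochain complex of $\D$ with coefficients in the appropriate cohomology presheaf, and force $E_2$-degeneration using Hodge theory. Writing $D^{[p]}=\bigsqcup_{i_0<\cdots<i_p}D_{i_0\cdots i_p}$ and $a_p\colon D^{[p]}\ra D$ for the natural map, I would first establish exactness of the augmented complex
\[
0\ra \calF\ra (a_0)_*\calF|_{D^{[0]}}\ra (a_1)_*\calF|_{D^{[1]}}\ra\cdots
\]
with alternating-sum-of-restrictions differentials, for $\calF=\CC_D$, $\calF=\tilde{\Omega}^r_D$, and the specialization $\calF=\calO_D$. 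For $\CC_D$ this is the standard Mayer--Vietoris resolution of the constant sheaf on a locally finite closed cover; for $\tilde{\Omega}^r_D$ exactness at the first spot is built into the definition of reduced forms, and exactness in higher degree reduces to a local computation on each stratum---at a point $x$ lying in exactly the components indexed by $J$, the stalk complex becomes the augmented simplicial cochain complex of the simplex on vertex set $J$ tensored with a local module of $r$-forms, hence contractible.

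Taking hypercohomology then gives a first-quadrant spectral sequence
\[
E_1^{p,q}=\bigoplus_{i_0<\cdots<i_p}H^q(D_{i_0\cdots i_p},\calF)\Longrightarrow H^{p+q}(D,\calF)
\]
whose $d_1$ is the alternating sum of restriction maps. Under the natural identification of the row $E_1^{\bullet,q}$ with the simplicial cochain complex $C^\bullet(\D,\calH^q)$ of the corresponding cohomology presheaf ($\calH^q_{dR}(\CC)$, $\calH^q(\Omega^r)$, or $\calH^q(\calO)$), the differential $d_1$ is precisely the simplicial coboundary, so $E_2^{p,q}=H^p(\D,\calH^q)$.

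The main obstacle is $E_2$-degeneration, and this is where the K\"ahler hypothesis on $X$ is essential. For $\calF=\CC_D$ this spectral sequence is Deligne's weight spectral sequence computing the mixed Hodge structure on $H^*(D,\CC)$, whose $E_2$-degeneration follows from strictness of the weight filtration. For $\calF=\tilde{\Omega}^r_D$ the analogous spectral sequence is the $\gr_F^r$-graded piece under the Hodge filtration on the relevant logarithmic de Rham complex, and degeneration at $E_2$ is again forced by strictness of the Hodge filtration. Since everything takes place in the abelian category of $\CC$-vector spaces, $E_2$-degeneration produces splittings without extension ambiguity and yields the stated direct-sum decompositions; the case $r=0$ is the immediate specialization $\tilde{\Omega}^0_D=\calO_D$.
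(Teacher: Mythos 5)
Your architecture matches the paper's: resolve each sheaf by the Mayer--Vietoris/\v{C}ech complex over the closed cover $\{D_i\}$, run the hypercohomology spectral sequence, identify $E_1$ with the simplicial cochain complex of $\D$ with coefficients in the appropriate cohomology presheaf, and force $E_2$-degeneration by Hodge theory. Two places deserve comment.

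The exactness of the \v{C}ech resolution for $\tilde{\Omega}^r_D$ (and for $\calO_D$) is a genuine lemma, and your local sketch misstates the stalk picture. At a point lying in exactly the components indexed by $J$, the $p$-th stalk is $\bigoplus_{I\subset J,\,|I|=p+1}\Omega^r_{D_I,x}$, and these are \emph{different} $\calO_{X,x}$-modules for different $I$ (each a quotient by a different set of coordinates), connected by non-invertible restriction maps. This is not the augmented cochain complex of the simplex on $J$ tensored with a single local module of $r$-forms, and there is no obvious contracting homotopy because one cannot naturally lift along the quotient maps. The paper proves exactness by a double induction on the number of components and on $\dim X$ (Lemma 3.1 for the structure sheaves, in fact for the non-reduced ones), and then reduces the $\Omega^r$ case to Lemma 3.1 by a suitable coordinate choice and coefficient-by-coefficient argument (Lemma 3.2). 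Your stalk argument is correct for $\CC_D$, where the stalks really are constant and the complex really is the augmented cochain complex of the full simplex on $J$.

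Your route to $E_2$-degeneration is a genuinely different, and valid, alternative. You invoke Deligne's weight argument for the de Rham spectral sequence and strictness of the Hodge filtration for the $\tilde{\Omega}^r_D$ graded pieces. The paper instead gives a hands-on proof via the $\partial\bar\partial$-lemma in the spirit of Deligne--Griffiths--Morgan--Sullivan: it interpolates the Dolbeault bicomplex $C^{*,*}$ and the cohomology bicomplex $H^{*,*}$ (with zero vertical differential) by the bicomplex $K^{*,*}$ of $\partial$-closed forms, shows both maps induce isomorphisms on $E_1$-pages via the $\partial\bar\partial$-lemma, and concludes degeneration by the Zeeman comparison theorem since the spectral sequence of $H^{*,*}$ visibly collapses at $E_2$. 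Both routes ultimately go back to Deligne; yours is shorter but imports the full mixed Hodge structure machinery, while the paper's is essentially self-contained. One small misnomer on your side: the relevant filtered complex for $\tilde{\Omega}^r_D$ is the reduced de Rham complex $\tilde{\Omega}^\bullet_D$ on $D$ itself (quasi-isomorphic to $\CC_D$), not a logarithmic de Rham complex, which would compute cohomology of the complement $X\setminus D$ rather than of $D$.
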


This formula shows that certain Zariski or deRham cohomology data on an SNC divisor $D$  can be constructed from (1) the dual simplicial incidence complex $\D(D)$, and (2) the corresponding cohomology data of the prime components $D_i$ and on their various intersections $D_{i_0...i_p}$.  The recipe is roughly to treat (2) as a presheaf on (1), take simplicial cohomology of $\D(D)$ with coefficients in that presheaf, then piece it together.  Deligne was the first \cite{del} to establish such a formula for the deRham cohomology of SNC divisors while developing his theory of mixed Hodge structures, and his proof uses the yoga of weights to establish the second page degeneration of the spectral sequence used in Section 4.  In \cite{del1}, there is a different proof of this degeneration using what is commonly referred to as the $\partial \bar \partial$-lemma.  Friedman gave a formula of this kind \cite{fr} for reduced K\"ahler differentials, and Stepanov gave one for the structure sheaf in \cite{S2}.  However, no concrete description of the components of each decomposition was given in these papers.  We unify these cohomological formulas in a combinatorial spirit, understanding the pieces of these decompositions as simplicial cohomologies with coefficients in a presheaf of vector spaces.  When that presheaf is $\calH^0(\calO) \cong \CC$, we see that the purely combinatorial cohomology $H^i(\D,\CC)$ lives inside $H^i(D,\calO_D)$.

It is worth remarking that this formula does not in any way depend on $D$ being an SNC {\em{divisor}} with irreducible component intersections.  In fact, we may take $D$ to be a suitable SNC cycle, i.e. a sum $D=\sum D_i$ where each component is a smooth irreducible $k_i$-dimensional subvariety of our ambient $n$-fold $X$, and all intersections $D_{i_0...i_p}:=D_{i_0}\cap ... \cap D_{i_p}$ are transversal, smooth, and irreducible of expected dimension $k_{i_0}+...+k_{i_p}-np$.  Geometrically speaking, an SNC cycle on an $n$-fold locally looks like a union of coordinate $k_i$-planes in $n$-space, and when each $k_i=n-1$, we recover the definition of an SNC divisor. 

\begin{cor}
In the above setting, we have a Hodge decomposition for $D$:

\[ H^i_{dR}(D,\CC) \cong \bigoplus_{p+q=i} H^p(D,\tilde{\Omega}^q_D). \]
\end{cor}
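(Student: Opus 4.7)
The plan is to deduce the Hodge decomposition of $D$ by combining the three decompositions in the Theorem with the classical Hodge decomposition on each of the closed strata $D_{i_0\ldots i_p}$. The Theorem rewrites both sides of the desired isomorphism as a direct sum of simplicial cohomologies of $\D$ with coefficients in presheaves of vector spaces, so the problem reduces to showing that the coefficient presheaf $\calH^q_{dR}(\CC)$ itself decomposes as a direct sum of the presheaves $\calH^s(\Omega^t)$ with $s+t=q$.

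The first step is to invoke the classical Hodge decomposition. Because $X$ is K\"ahler and each intersection $D_{i_0\ldots i_p}$ is a smooth closed complex submanifold, each $D_{i_0\ldots i_p}$ is itself a compact K\"ahler manifold (with the restricted metric), hence for every $q$ we have
\[ H^q_{dR}(D_{i_0\ldots i_p},\CC) \cong \bigoplus_{s+t=q} H^s(D_{i_0\ldots i_p},\Omega^t_{D_{i_0\ldots i_p}}). \]
The key point I would then verify is naturality: the restriction maps making up the presheaf structure are pullbacks along the holomorphic closed immersions $D_{i_0\ldots i_p i_{p+1}} \into D_{i_0\ldots i_p}$, and pullback of a smooth $(s,t)$-form under a holomorphic map is again of type $(s,t)$. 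Hence the type decomposition is respected by every restriction map, and we obtain an isomorphism of presheaves
\[ \calH^q_{dR}(\CC) \cong \bigoplus_{s+t=q} \calH^s(\Omega^t). \]

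Since simplicial cochains $C^p(\D,-)$ are defined termwise on simplices, the functor $H^p(\D,-)$ commutes with finite direct sums of presheaves. Applying the Theorem to both sides and substituting the presheaf decomposition yields
\[ H^i_{dR}(D,\CC) \cong \bigoplus_{p+q=i} H^p(\D,\calH^q_{dR}(\CC)) \cong \bigoplus_{p+q=i}\ \bigoplus_{s+t=q} H^p(\D,\calH^s(\Omega^t)). \]
Re-indexing by grouping according to the exponent $r:=t$ on $\Omega$ and collecting the inner double sum as $\bigoplus_{a+b=n}H^a(\D,\calH^b(\Omega^r))$ with $n+r=i$, a second application of the Theorem (for $\tilde{\Omega}^r_D$) rewrites the inner sum as $H^n(D,\tilde{\Omega}^r_D)$, giving $H^i_{dR}(D,\CC)\cong\bigoplus_{n+r=i}H^n(D,\tilde{\Omega}^r_D)$.

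The main obstacle I anticipate is not the reindexing, which is purely bookkeeping, but the naturality of the Hodge decomposition on the strata under the presheaf restriction maps; without it, one only has a stalkwise (i.e.\ simplex-by-simplex) decomposition, which is not sufficient to commute with $H^p(\D,-)$. This step rests on the fact that Hodge type is preserved by pullback along holomorphic maps between compact K\"ahler manifolds, which is immediate from the characterization of $H^{s,t}$ as classes represented by $\partial$- and $\bar\partial$-closed $(s,t)$-forms. Once that compatibility is in hand, the rest is a formal manipulation of the two formulas already established in the Theorem.
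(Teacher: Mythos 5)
Your proof is correct and follows essentially the same route as the paper: invoke the classical Hodge decomposition on each compact K\"ahler stratum $D_{i_0\ldots i_p}$, observe that it is natural with respect to the holomorphic restriction maps (hence gives a decomposition of presheaves $\calH^q_{dR}(\CC)\cong\bigoplus_{s+t=q}\calH^s(\Omega^t)$), and then reindex using both parts of Theorem 5.1. The only difference is that you spell out the naturality argument (preservation of Hodge type under holomorphic pullback) which the paper merely asserts with the word ``natural''.
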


In the toric setting, the cohomology of the structure sheaf of an SNC divisor is purely combinatorial.
\begin{cor}
Let $X$ be a smooth complex projective toric variety, and let $D$ be a torus-invariant SNC divisor on $X$.  Then for every $i \geq 0$ we have an isomorphism 

\[ H^i(D,\calO_D) \cong H^i(\D,\CC). \] 
\end{cor}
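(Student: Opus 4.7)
The plan is to apply the preceding theorem (the decomposition for $H^i(D,\calO_D)$) and show that for a torus-invariant SNC divisor on a smooth projective toric variety, the presheaf $\calH^q(\calO)$ collapses to the constant presheaf $\underline{\CC}$ in degree $q=0$ and vanishes for $q>0$. Once this is in hand, the direct sum $\bigoplus_{p+q=i}H^p(\D,\calH^q(\calO))$ reduces to the single term $H^i(\D,\calH^0(\calO))=H^i(\D,\CC)$, which is what we want.

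First I would identify each stratum $D_{i_0\ldots i_p}$ as a toric variety. Since $X$ is a smooth projective toric variety with fan $\Sigma$, the torus-invariant prime divisors $D_i$ correspond to rays $\rho_i\in\Sigma(1)$, and a nonempty intersection $D_{i_0\ldots i_p}=D_{i_0}\cap\cdots\cap D_{i_p}$ corresponds to the cone $\sigma=\rho_{i_0}+\cdots+\rho_{i_p}\in\Sigma$; the intersection is precisely the orbit closure $V(\sigma)$. Smoothness of $X$ ensures $\sigma$ is generated by part of a basis of the lattice, so $V(\sigma)$ is itself a smooth complete toric variety. In particular each $D_{i_0\ldots i_p}$ is irreducible, so the hypothesis of the main theorem is satisfied and $\D(D)$ is a genuine simplicial complex.

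Next I would invoke the classical vanishing theorem for the structure sheaf on a complete toric variety: for any complete toric variety $Y$ one has $H^q(Y,\calO_Y)=0$ for $q>0$ (this is the $m=0$ case of Demazure vanishing, or equivalently follows from the explicit computation of sheaf cohomology of torus-invariant divisors in terms of the fan). Applied to each $V(\sigma)=D_{i_0\ldots i_p}$, this yields $H^q(D_{i_0\ldots i_p},\calO_{D_{i_0\ldots i_p}})=0$ for all $q>0$. On the other hand, since each stratum is connected, $H^0(D_{i_0\ldots i_p},\calO_{D_{i_0\ldots i_p}})=\CC$, and the restriction maps on $H^0$ between nested strata are just the identity on $\CC$, so the presheaf $\calH^0(\calO)$ is the constant presheaf $\underline{\CC}$ on $\D$.

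The final step is purely formal: plugging into the theorem,
\[
H^i(D,\calO_D)\cong\bigoplus_{p+q=i}H^p(\D,\calH^q(\calO))=H^i(\D,\calH^0(\calO))=H^i(\D,\CC),
\]
since all presheaves $\calH^q(\calO)$ with $q>0$ are identically zero. The only real content beyond the main theorem is the vanishing $H^{>0}(V(\sigma),\calO)=0$ for the toric orbit closures; everything else is bookkeeping with the fan. I do not anticipate any serious obstacle, since the vanishing is standard toric geometry and the identification of intersections with orbit closures is immediate from the orbit-cone correspondence.
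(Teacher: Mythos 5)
Your proof is correct and follows essentially the same route as the paper: apply Theorem 5.1 and kill the $q>0$ summands by the vanishing of $H^q(\calO)$ on complete toric varieties (the paper cites Fulton Section 3.5 for globally generated line bundles, where you invoke Demazure vanishing and the orbit-cone identification of the strata — the same fact). Your explicit observation that smoothness forces each $D_{i_0\ldots i_p}$ to be an irreducible orbit closure $V(\sigma)$ is a nice addition, since it verifies the standing irreducibility hypothesis rather than assuming it.
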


Inspired by the ideas of \cite{S2}, we generalize and further develop the theory from the point of view of Stepanov, but note that 
similar avenues of thought have been explored by G. L. Gordon in the context of monodromy of complex analytic families \cite{G1}, by P. Deligne, P. Griffiths, J. Morgan, and D. Sullivan in the context of the cohomology of K\"ahler manifolds \cite{del1},  by R. Friedman in the context of deformations and smoothings of varieties with normal crossings \cite{fr}, and by P. Deligne \cite{del}, F. El Zein \cite{elz}, J. Carlson \cite{carl}, and J. H. M. Steenbrink \cite{St} in the context of mixed Hodge structures.  In Section 2, we briefly recall the notion of a presheaf on a simplicial complex and the cohomology of a simplicial complex with coefficients in a presheaf.  In Section 3, we establish some exact sequences that become our starting point in building a spectral sequence that we use in Section 4.  In Section 5 we state our main results and conjecture the existence of such a formula for effective non-reduced divisors with SNC support.  We then show that this would imply the vanishing of $H^i(\D(D),\CC)$ for $i>0$ when $D$ is the exceptional divisor of a resolution of an isolated rational singularity, and conclude with some examples.    

For the purposes of this paper, we shall always assume our divisor $D= \sum r_iD_i$ is such that each intersection $D_{i_0} \cap ... \cap D_{i_p}$ is irreducible.  By variety we mean an irreducible one, and we work over $\CC$.

\subsection{Acknowledgements}
The author is deeply grateful to his advisor J. W{\l}odarczyk for introducing him to this interesting subject and sincerely thanks him 
for his encouragement and patience.  The author also thanks D. Arapura for valuable suggestions regarding Dolbeault resolutions in the non-reduced setting, K. Matsuki for helpful comments on the examples, and J. McClure for an explanation of rationalization.

\section{Simplicial complexes and cohomology}

Let $\D$ be a finite simplicial complex, and fix a field $k$.

\begin{df}
A (covariant) \textsl{presheaf} $\calV$ of $k$-vector spaces on $\D$ consists of the following data:
\\
(a) for every subsimplex $\D'$ of $\D$, a $k$-vector space $\calV(\D')$, and
\\
(b) for every inclusion $\D'' \subset \D'$ of subsimplices of $\D$, a ``restriction'' morphism of $k$-vector spaces 
$r=r_{\D''\D'}: \calV(\D'') \rightarrow \calV(\D')$,
\\
\\
subject to the usual conditions
\\
\\
(0) $\calV(\varnothing)=0$,
\\
(1) $r_{\D'\D'}$ is the identity map on $\calV(\D')$,
\\
(2) if $\D''' \subset \D'' \subset \D'$ are three subsimplices of $\D$, then $r_{\D'''\D'}=r_{\D''\D'} \circ r_{\D'''\D''}$.
\end{df}

Thus our presheaf $\calV$ on $\D$ is a covariant functor from the category whose objects are subsimplices of $\D$ and whose morphisms 
are inclusions to the category of $k$-vector spaces.  This is slightly different from the usual definition of a presheaf on a 
topological space, which is usually contravariant. 

\begin{df}
We define the simplicial cohomology of $\D$ with coefficients in the presheaf $\calV$ using the \v{C}ech complex.

For each $p \geq 0$, define $C^p=C^p(\D,\calV)=\displaystyle \bigoplus_{i_0<...<i_p} \calV(\D_{i_0...i_p})$ where the direct sum is taken over all $p$-subsimplices $\D_{i_0...i_p}$ of $\D$.  For $v=(v_{i_0...i_p}) \in C^p$, the combinatorial \v{C}ech differential $\delta: C^p \rightarrow C^{p+1}$ is given by $(\delta v)_{i_0...i_{p+1}}= \displaystyle \sum_{k=0}^{p+1} (-1)^k {v_{i_0...\widehat{i_k}...i_{p+1}}}_{|\calV(\D_{i_0...i_{p+1}})} \in \calV(\D_{i_0...i_{p+1}})$, where the restriction map is  $r_{\D_{i_0...\widehat{i_k}...i_{p+1}}\D_{i_0...i_{p+1}}}$.  The compatibility of the restriction morphisms implies that $\delta$ is a well-defined $k$-linear map, and one checks that $\delta^2=0$.

We may now define the $p^{th}$ cohomology vector space of $\D$ with coefficients in $\calV$ to be $H^p(\D,\calV)=h^p(C^{\cdot}(\D,\calV))$.
\end{df}

\subsection{The Dual Complex of a Simple Normal Crossings Divisor}
Let $D=\sum D_i$ be a simple normal crossings (SNC) divisor on a complex projective variety $X$.  We define the dual complex $\D(D)$ to be the CW-complex whose cells are standard simplices $\D^k_{i_0...i_p}$ corresponding to the irreducible components $D^k_{i_0...i_p}$ of the nonempty intersections $D_{i_0}\cap ... \cap D_{i_p}= \bigcup_k D^k_{i_0...i_p}$.  That is to say, the 0-simplices (vertices) $\D_i$ of $\D(D)$ correspond to the prime components $D_i$ of $D$, the 1-simplices (edges) $\D_{ij}^k$ correspond to the irreducible components $D_{ij}^k$ of all nonempty intersections $D_i \cap D_j=\bigcup_kD_{ij}^k$, the 2-simplices (triangular faces) correspond to the irreducible components of nonempty triple intersections $D_i \cap D_j \cap D_k$, and so on.  The $p-1$ simplex $\D_{i_0...\widehat{i_s}...i_p}^j$ is a face of the $p$-simplex $\D_{i_0...i_p}^k$ iff $D_{i_0...i_p}^k \cap D_{i_0...\widehat{i_s}...i_p}^j \neq \varnothing$.  

This complex was first introduced by G. L. Gordon to study monodromy in analytic families \cite{G1}, and whose homotopy type has been 
more recently studied by D. A. Stepanov in the setting where $D$ is the exceptional divisor of a resolution of an isolated singularity 
\cite{S1}, \cite{S2}, \cite{S3}, and by A. Thuillier in a more general setting \cite{th}.

Notice that if dim $X = n$, then dim $\D(D) \leq n-1$, and if $D_{(r)}=\sum r_iD_i$ is an effective non-reduced divisor with SNC support, then $\D(D)=\D(D_{(r)})$, so there is no confusion in calling this complex $\D$.  We also restrict our attention to the case when $\D(D)$ is a simplicial complex, which happens iff each $D_{i_0...i_p}:=D_{i_0}\cap ... \cap D_{i_p}$ is irreducible. 

We consider certain presheaves of $\CC$-vector spaces over $\D(D)$.
\begin{ex}
The constant presheaf $\CC$ which assigns to $\CC$ to each $\D_{i_0...i_p}$, with identity restrictions.
\end{ex}
\begin{ex}
The presheaf $\calH^q(\calO_{(r)})$ which assigns $H^q(D^{(r)}_{i_0...i_p},\calO_{D^{(r)}_{i_0...i_p}})$ to each $\D_{i_0...i_p}$, where 
${D^{(r)}_{i_0...i_p}}$ is the scheme-theoretic intersection $r_{i_0}D_{i_0}\cap ... \cap r_{i_p}D_{i_p}$ defined in $X$ by the 
$\calO_X$-ideal $\calI^{r_{i_0}}_{i_0}+...+\calI^{r_{i_p}}_{i_p}$, with the natural $\CC$-linear maps on the $q^{th}$ cohomology level induced 
by the inclusion $D^{(r)}_{i_0...i_p}\into D^{(r)}_{i_0...\widehat{i_s}...i_p}$.  Here $\calI_i$ is the $\calO_X$-ideal defining $D_i$ in $X$.  
Note that if all $r_i=1$ then we are in the SNC case and the scheme-theoretic intersection $D_{i_0...i_p}$ coincides with the 
set-theoretic intersection, in which case we denote $\calH^q(\calO_{(1)})$ by $\calH^q(\calO)$.  Observe that $\calH^0(\calO) \cong \CC$ is the constant presheaf since we have natural isomorphisms 
$H^0(D_{i_0...i_p},\calO_{D_{i_0...i_p}}) \cong \CC$.
\end{ex}
\begin{ex}
The presheaf $\calH^q({\Omega}^r)$ which assigns 
$H^q(D_{i_0...i_p},\Omega^r_{D_{i_0...i_p}})$ to each $\D_{i_0...i_p}$, with the natural $\CC$-linear maps on the $q^{th}$ cohomology level 
induced by the inclusion $D_{i_0...i_p}\into D_{i_0...\widehat{i_s}...i_p}$.
\end{ex}

\begin{ex}
The presheaf $\calH^q_{dR}(\CC)$ which assigns the deRham (or singular) cohomology $H^q_{dR}(D_{i_0...i_p},\CC)$ to each $\D_{i_0...i_p}$, with the natural $\CC$-linear maps on the $q^{th}$ cohomology level induced by the inclusion $D_{i_0...i_p}\into D_{i_0...\widehat{i_s}...i_p}$
\end{ex}

Thus, using the definitions above, it makes sense to refer to $H^p(\D(D),\calV)$ where $\calV$ is any of the above presheaves.

\section{Some Technical Lemmas}

We first need to establish the exactness of a sheafified \v{C}ech complex resolving the structure sheaf of an effective divisor with simple normal crossing support.

Let $D_{(r)}=\displaystyle \sum_{i=1}^N r_iD_i$ be an effective divisor with SNC support on a smooth projective variety $X$ (for simplicity we say $D$ is NC), and as before let 
$D^{(r)}_{i_0...i_p}:=r_{i_0}D_{i_0}\cap ... \cap r_{i_p}D_{i_p}$ be the scheme-theoretic intersection in $X$.  That is to say, if 
$\calI_i \subset \calO_X$ is the ideal sheaf defining $D_i$ in $X$, then $D_{i_0...i_p}^{(r)}$ is defined by the ideal 
$\calI_{i_0}^{r_{i_0}} + ... + \calI_{i_p}^{r_{i_p}}$ in $\calO_X$. Notice that $D_{i}^{(r)}=r_iD_i$.  Let 
$\calC^p=\displaystyle \bigoplus_{i_0<...<i_p}\iota_*\calO_{D^{(r)}_{i_0...i_p}}$ where $\iota=\iota^{(r)}_{i_0...i_p}: D^{(r)}_{i_0...i_p} \into D_{(r)}$ is the 
natural inclusion.  We have a combinatorial \v{C}ech differential $\delta^p: \calC^p \rightarrow \calC^{p+1}$ defined as follows:

given $\alpha=(f_{i_0...i_p}) \in \calC^p(U)=\displaystyle \bigoplus_{i_0<...<i_p}\calO_{D^{(r)}_{i_0...i_p}}(U \cap D^{(r)}_{i_0...i_p})$ a section of the sheaf 
$\calC^p$ over an open set $U \subset D_{(r)}$, then
\[(\delta \alpha)_{i_0...i_{p+1}}(U)=\sum_{j=0}^{p+1}(-1)^j(f_{i_0...\widehat{i_j}...i_{p+1}})_{|U\cap D^{(r)}_{i_0...i_{p+1}}}\] so 
$(\delta \alpha)(U) \in \calC^{p+1}(U)$, and one easily checks that $\delta^{p+1} \circ \delta^p=0$ for any $p \geq 0$..  Also, there is a natural injection 
$\rho: \calO_{D_{(r)}} \into \calC^0=\displaystyle \bigoplus_i \iota_*\calO_{r_iD_i}$ given by restriction.  It is injective because if we are given a 
section $f \in \calO_{D_{(r)}}(U)$ over an affine open subset $U \subset D_{(r)}$, then $\rho_U(f)=0$ implies that $f_{|U\cap r_iD_i}=0$ for 
every $i$, hence $f$ is in the ideal $I_i^{r_i} \subset \calO_{D_{(r)}}(U)$ defining $r_iD_i$ in $D_{(r)}$ over $U$ for every $i$.  But then 
$f \in \bigcap I_i^{r_i}$ which means that $f$ must vanish on all of $D_{(r)} \cap U=\bigcup r_iD_i\cap U$, i.e. that $f=0$ on 
$D_{(r)}$ over $U$.
\begin{lem}
The complex described above 
\[0 \ra \calO_{D_{(r)}} \ra \calC^0 \overset{\delta^0} \ra \calC^1 \overset{\delta^1} \ra \calC^2 \overset{\delta^2} \ra ...\] 
is an exact sequence of $\calO_{D_{(r)}}$-modules.
\end{lem}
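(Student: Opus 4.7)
The plan is to verify exactness stalkwise and, at each stalk, to reduce the claim to a transparent combinatorial calculation with monomials. Since the sequence consists of $\calO_{D_{(r)}}$-modules, exactness can be checked at each point $x\in D_{(r)}\subseteq X$. Fix such an $x$ and, invoking the SNC hypothesis, choose local coordinates $z_1,\ldots,z_n$ on $X$ around $x$ so that the components of $D_{(r)}$ passing through $x$ are exactly $D_1,\ldots,D_s$, cut out by $z_1,\ldots,z_s$ respectively. For $i>s$ the stalk of $\iota_*\calO_{D^{(r)}_{i_0\ldots i_p}}$ at $x$ vanishes, so after passing to the completion $\widehat A = \CC[[z_1,\ldots,z_n]]$ of $\calO_{X,x}$ (faithfully flat, hence harmless for exactness) the stalk of the complex at $x$ becomes
\[
0 \to \widehat A\big/(z_1^{r_1}\cdots z_s^{r_s}) \to \bigoplus_{i\le s} \widehat A/(z_i^{r_i}) \to \bigoplus_{i<j\le s} \widehat A/(z_i^{r_i},z_j^{r_j}) \to \cdots
\]
with the signed restrictions from the statement as differentials.

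Each module in this complex has a topological $\CC$-basis of monomials $z^a=z_1^{a_1}\cdots z_n^{a_n}$, and restriction sends the class of such a monomial either to the class of the same monomial in a smaller quotient or to zero; in particular it preserves the monomial multidegree. The combinatorial \v{C}ech differential is thus diagonal with respect to the monomial decomposition, and the entire complex splits as a topological direct product of sub-complexes indexed by $a\in\NN^n$. For fixed $a$, set $S(a)=\{\,i\le s : a_i<r_i\,\}$: then $z^a$ represents a nonzero class in $\widehat A/(z_{i_0}^{r_{i_0}},\ldots,z_{i_p}^{r_{i_p}})$ iff $\{i_0,\ldots,i_p\}\subseteq S(a)$, and a nonzero class in $\widehat A/(z_1^{r_1}\cdots z_s^{r_s})$ iff $S(a)\neq\varnothing$. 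Consequently, for every $a$ with $S(a)\neq\varnothing$ the $z^a$-isotypic sub-complex is precisely the augmented simplicial cochain complex, with $\CC$-coefficients, of the full simplex on vertex set $S(a)$.

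A full simplex is contractible, so its reduced cohomology vanishes and each such sub-complex is exact; monomials with $S(a)=\varnothing$ vanish in $\widehat A/(z_1^{r_1}\cdots z_s^{r_s})$ and therefore contribute nothing. Assembling over $a$ yields exactness of the stalk complex in $\widehat A$, and faithful flatness of completion transports this back to exactness of the original stalk complex, which is what is required. The one subtle step is justifying that $\delta$ genuinely diagonalizes in the monomial basis over the completed local ring: this follows because each coordinate of $\delta$ is $\pm$ a restriction map, which preserves monomial multidegree, and because taking (co)homology commutes with the topological direct product of $\CC$-linear sub-complexes. An alternative route, avoiding the monomial bookkeeping, is induction on $s$ using the Mayer--Vietoris-type short exact sequence $0\to\calO_{D\cup E}\to\calO_D\oplus\calO_E\to\calO_{D\cap E}\to 0$ applied to $D = r_1D_1+\cdots+r_{s-1}D_{s-1}$ and $E = r_sD_s$, reducing the verification to one fewer component at each step.
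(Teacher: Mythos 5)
Your proof is correct, and it takes a genuinely different route from the one in the paper. The paper establishes exactness by a double induction---an inner induction on the number $N$ of components and an outer induction on $\dim X$---resting on explicit ideal identities such as $(I_1^{r_1}+I_3^{r_3})\cap(I_2^{r_2}+I_3^{r_3})=(I_1^{r_1}\cap I_2^{r_2})+I_3^{r_3}$, which hold because $D_{(r)}$ has normal crossing support. You instead pass to stalks, complete, and exploit the explicit local model $\CC[[z_1,\ldots,z_n]]/(z_1^{r_1}\cdots z_s^{r_s})$; there the \v{C}ech differential is monomial-diagonal, so the stalk complex splits as a direct product over multidegrees $a$ of augmented simplicial cochain complexes of full simplices (on the vertex set $S(a)$), whose exactness is just contractibility, and the degrees with $S(a)=\varnothing$ drop out entirely. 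This isolates the combinatorial content of the lemma and makes the role of the SNC hypothesis completely transparent (it is what supplies the coordinate model), at the modest cost of invoking faithful flatness of completion and the fact that cohomology commutes with direct products of complexes of $\CC$-vector spaces, both of which you correctly flag and both of which are routine. The Mayer--Vietoris variant you sketch at the end is closer in spirit to the paper's inner induction on the number of components; carried out in full it would require essentially the same ideal-theoretic manipulations (and likely the same auxiliary induction on dimension to handle the $D\cap E$ term) that appear in the paper's proof, so your monomial argument is the cleaner of your two suggestions.
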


\begin{proof}
We have already shown that the first map is injective. We first treat the $p=0$ joint.  Suppose we are given a closed 0-cycle 
$\alpha=(f_i)$, i.e. $f_j-f_i=0$ on $D^{(r)}_{ij}$ for every $i<j$.  Replacing $D_{(r)}$ by an affine subset of $D_{(r)}$, we have 
$f_2-f_1 \in I^{(r)}_{12}$ the ideal of $D^{(r)}_{12}$ in $\calO_{D_{(r)}}$.  Since $D_{(r)}$ is NC, $I^{(r)}_{12}=I^{r_1}_1+I^{r_2}_2$ where 
$I_i$ is the ideal of $D_i$ in $\calO_{D_{(r)}}$.  Hence we can write $f_2-f_1=a_2-a_1$ where $a_i \in I^{r_i}_i$.  Now set 
$f^{(2)}=f_2-a_2=f_1-a_1 \in \calO_{D_{(r)}}$, which lifts both $f_1$ and $f_2$.  By assumption, we have $f^{(2)}-f_3=0$ on $D^{(r)}_{13}$ and 
$D^{(r)}_{23}$, which means 
\[f^{(2)}-f_3 \in I^{(r)}_{13} \cap I^{(r)}_{23}=(I_1^{r_1}+I_3^{r_3}) \cap (I_2^{r_2}+I_3^{r_3})=(I_1^{r_1} \cap I_2^{r_2}) +I_3^{r_3}.\]
All of these ideal equalities hold because $D_{(r)}$ is NC.  Write $f^{(2)}-f_3=a_{12}-a_3$ where $a_{12}\in I^{(r)}_{12}$ and 
$a_3\in I^{r_3}_3$, and set $f^{(3)}=f^{(2)}-a_{12}=f_3-a_3 \in \calO_{D_{(r)}}$, which is a lift of $f_1,f_2,$ and $f_3$.  Again by assumption 
we have $f^{(3)}-f_4=0$ on $D^{(r)}_{14}, D^{(r)}_{24}, D^{(r)}_{34}$, hence 
\[f^{(3)}-f_4 \in I^{(r)}_{14}\cap I^{(r)}_{24}\cap I^{(r)}_{34}=(I_1^{r_1}+I_4^{r_4})\cap (I_2^{r_2}+I_4^{r_4})\cap (I_3^{r_3}+I_4^{r_4})=(I_1^{r_1} \cap I^{r_2}_2 \cap I_3^{r_3})+I_4^{r_4}\]
Again these ideal equalities hold because $D_{(r)}$ is NC.  Hence we may write $f^{(3)}-f_4=a_{123}-a_4$ where $a_{123}\in I^{(r)}_{123}$ and 
$a_4\in I_4^{r_4}$.  Set $f^{(4)}=f^{(3)}-a_{123}=f_4-a_4 \in \calO_{D_{(r)}}$, which is a lift of $f_1,...,f_4$.  Continuing this way, we 
arrive at $f=f^{(N)} \in \calO_{D_{(r)}}$ lifting each $f_i$, and hence mapping to our cycle $(f_i)$.

For the general $p \geq 1$ case, we induct on the dimension of the ambient variety $X$.  If dim $X=0$, then there is nothing to show.
To show the inductive step, we induct on $N$, the number 
of components of $D_{(r)}$, the case $N=1$ being trivial, since our complex is simply 
\[0 \rightarrow \calO_{D_{(r)}} \rightarrow \calO_{D_{(r)}} \rightarrow 0\]
which is obviously exact.  Let $\alpha=(f_{i_0...i_p}) \in \calC^p$ be a closed $p$-cycle, and write 
\[\alpha=\alpha_{\neq 1}\oplus \alpha_1=(f_{i_0...i_p})_{i_0>1}\oplus (f_{1i_1...i_p})\]
The \v{C}ech differential of the complex associated to $r_2D_2+...+r_ND_N$ acts the same way as does the \v{C}ech differential for $D_{(r)}$ on 
the $\alpha_{\neq 1}$ component, hence the induction hypothesis on $N$ gives us a $\beta_{\neq 1}=(g_{i_0...i_{p-1}})_{i_0>1} \in 
\displaystyle \bigoplus_{1<i_0<...<i_{p-1}}\iota_* \calO_{D^{(r)}_{i_0...i_{p-1}}}$ such that $(\delta^{p-1}\beta_{\neq 1})_{i_0...i_p}=(\alpha_{\neq 1})_{i_0...i_p}$, $i_0>1$.

Now set $D_i'=D^{(r)}_{1i}=r_1D_1 \cap r_iD_i$ for $i=2,...,N$ so that $D'=\displaystyle \sum_{i=2}^N D_i'$ is an effective divisor on 
$D_1$ with SNC support.  
Denote by $D'_{i_1...i_p}=D'_{i_1} \cap ... \cap D'_{i_p}$ the scheme-theoretic intersection in $D_1$, and consider 
\[({g_{i_1...i_p}}_{|D^{(r)}_{1i_1...i_p}}-f_{1i_1...i_p}) \in \bigoplus_{1<i_1<...<i_p} \calO_{D'_{i_1...i_p}}=\bigoplus_{1<i_1<...<i_p} \calO_{D^{(r)}_{1i_i...i_p}}\]
as a $p-1$ cocycle in the complex associated to $D'$.  This cocycle is closed, i.e. we have that 
\[(g_{i_2...i_{p+1}}-f_{1i_2...i_{p+1}})_{|D^{(r)}_{1i_1...i_{p+1}}} - (g_{i_1i_3...i_{p+1}}-f_{1i_1i_3...i_{p+1}})_{|D^{(r)}_{1i_1...i_{p+1}}} + ... +(-1)^p(g_{i_1...i_p}-f_{1i_1...i_p})_{|D^{(r)}_{1i_1...i_{p+1}}}\]
\[=\sum_{j=1}^{p+1}(-1)^{j-1}{g_{i_1...\widehat{i_j}...i_{p+1}}}_{|D^{(r)}_{1i_1...i_{p+1}}} + \sum_{j=1}^{p+1}(-1)^j{f_{1i_1...\widehat{i_j}...i_{p+1}}}_{|D^{(r)}_{1i_1...i_{p+1}}}\]
\[={f_{i_1...i_{p+1}}}_{|D^{(r)}_{1i_1...i_{p+1}}} + \sum_{j=1}^{p+1}(-1)^j{f_{1i_1...\widehat{i_j}...i_{p+1}}}_{|D^{(r)}_{1i_1...i_{p+1}}}=0\]
by our assumption that $\alpha$ is a closed $p$-cycle and by our construction of the $g_{i_1...i_p}$.  Thus, by our induction hypothesis 
on dim $D_1 <$ dim $X$, this cocycle is a coboundary, meaning that there exists a $p-2$-cycle 
$(h_{i_1...i_{p-1}})=:(g_{1i_1...i_{p-1}})=:\beta_1\in \displaystyle \bigoplus_{1<i_1<...<i_{p-1}} \calO_{D'_{i_1...i_{p-1}}} = \displaystyle \bigoplus_{1<i_1<...<i_{p-1}} \calO_{D^{(r)}_{1i_1...i_{p-1}}}$ such that for every $1<i_1<...<i_p$, we have that 
\[(g_{1i_2...i_p} - g_{1i_1i_3...i_p} + ... + (-1)^{p-1}g_{1i_1...i_{p-1}})_{|D^{(r)}_{1i_1...i_p}} = {g_{i_1...i_p}}_{|D^{(r)}_{1i_1...i_p}} - f_{1i_1...i_p}.\]
Hence we may take $\beta=\beta_{\neq 1} \oplus \beta_1=(g_{i_0...i_{p-1}})_{i_0>1} \oplus (g_{1i_1...i_{p-1}})$ and one sees that by construction 
we have $\delta^{p-1}(\beta)=\alpha$ for every $i_0<...<i_p$, which completes the proof.
\end{proof}

We show briefly how the usual K\"ahler forms on a (reduced) SNC divisor $D=\sum D_i$ on $X$ are related to reduced forms, following Friedman \cite{fr} Section 1.  Since $D$ is reduced and a local complete intersection, the conormal sequence
\[ 0 \ra \calI/\calI^2 \ra {\Omega^1_X}_{|D} \ra \Omega^1_D \ra 0\]
is exact, and allows us to define the K\"ahler differentials on $D$, taking $\Omega^r_D:= \Wedge^r \Omega^1_D$.  There is a natural map 
$\Omega^r_D \ra \bigoplus \iota_* \Omega^r_{D_i}$ whose kernel $\tau^r_D$ consists of those differentials supported on Sing($D$).  We may then define $\tilde{\Omega}^r_D:= \Omega^r_D / \tau^r_D$.  One can directly show that this agrees with the definition given in the introduction.

\begin{lem} 
Let $D=\sum D_i$ be a SNC divisor, and let $\calE_r^p= \displaystyle \bigoplus_{i_0<...<i_p} \iota_* \Omega^r_{D_{i_0...i_p}}$.  Then we have an exact sequence of $\calO_D$-modules
\[0 \ra \tilde{\Omega}^r_D \ra \calE_r^0 \overset{\delta^0} \ra \calE_r^1 \overset{\delta^1} \ra \calE_r^2 \overset{\delta^2} \ra ...\] where the first map is inclusion and the other maps are the \v{C}ech differentials as in Lemma 3.1.
\end{lem}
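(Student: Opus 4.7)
The exactness of this complex reduces to Lemma 3.1 after a local coordinate decomposition of forms. The first map $\tilde\Omega^r_D \hookrightarrow \calE_r^0$ is injective by construction, and exactness at $\calE_r^0$ is essentially the defining property of $\tilde\Omega^r_D$: a tuple $(\alpha_i) \in \bigoplus_i \Omega^r_{D_i}$ lies in the kernel of $\delta^0$ iff $\alpha_j|_{D_{ij}} = \alpha_i|_{D_{ij}}$ for every $i<j$, which is precisely the compatibility condition used to define $\tilde\Omega^r_D$ in the introduction.

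For exactness at $\calE_r^p$ with $p \geq 1$, since this is a stalk-local question, I would work in a local analytic chart about each $x \in D$. Choose coordinates $z_1,\ldots,z_n$ on a neighborhood $U$ of $x$ so that the components of $D$ through $x$ are $D_i = V(z_i)$. Then $\Omega^r_{D_{i_0\ldots i_p}}|_U$ is free on the basis $\{dz_J\}$ indexed by $r$-subsets $J \subset \{1,\ldots,n\}\setminus\{i_0,\ldots,i_p\}$, and the restriction map along $D_{i_0\ldots i_p}\hookrightarrow D_{i_0\ldots\widehat{i_k}\ldots i_p}$ sends $f\cdot dz_J$ to $f|_{D_{i_0\ldots i_p}}\cdot dz_J$ if $i_k\notin J$ and to $0$ if $i_k \in J$.

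The key observation is that the \v{C}ech differential $\delta^\bullet$ preserves the splitting of $\calE_r^\bullet|_U$ by the multi-index $J$, since each $dz_J$-component either pulls back to a $dz_J$-component or to zero. For each fixed $J$, the subcomplex
\[
\calE_{r,J}^p \;=\; \bigoplus_{\substack{i_0<\ldots<i_p \\ \{i_0,\ldots,i_p\}\cap J=\emptyset}} \calO_{D_{i_0\ldots i_p}}\cdot dz_J
\]
is, after stripping off the constant generator $dz_J$, exactly the sheafified \v{C}ech-type complex of Lemma 3.1 attached to the reduced SNC divisor $\sum_{i\notin J} D_i$ on $U$. Applying Lemma 3.1 to each such subdivisor gives exactness of $\calE_{r,J}^\bullet|_U$ in positive degrees, and summing over $J$ yields exactness of $\calE_r^\bullet|_U$. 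This suffices since exactness can be verified on stalks.

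The main obstacle is really bookkeeping rather than a conceptual hurdle: one has to confirm that the \v{C}ech differentials in each $\calE_{r,J}^\bullet$ genuinely correspond to those considered in Lemma 3.1, and that the decomposition by $J$, although not canonical or globally defined, still yields a valid stalk-local argument. As a byproduct, identifying the kernel of $\delta^0$ on each piece (namely $\calO_{\bigcup_{i\notin J}D_i}\cdot dz_J$, again by Lemma 3.1) and summing over $J$ recovers the sheaf of compatible tuples $(\alpha_i)$, re-confirming the identification with $\tilde\Omega^r_D$ from the first paragraph.
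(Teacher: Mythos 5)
Your argument is correct and follows essentially the same route as the paper's sketch: work stalk-locally in adapted coordinates with $D_i = V(z_i)$, observe that the \v{C}ech differential respects the grading of $\calE_r^\bullet$ by the multi-index $J$ of the basis wedge $dz_J$, and apply Lemma 3.1 in each $J$-sector. Your version is somewhat more explicit than the paper's about the key bookkeeping point that the restriction map sends $dz_J$ either to $dz_J$ or to zero, and about identifying the $J$-sector with the complex of Lemma 3.1 for the subdivisor $\sum_{i\notin J} D_i$.
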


\begin{proof}
R. Friedman gives a proof of this lemma in \cite{fr} Proposition 1.5 using a double induction scheme similar to ours in Lemma 3.1.  We sketch an alternate proof:  since $\tilde{\Omega}^r_D$ is defined to be $\ker \delta^0$ we get exactness at the $p=0$ joint for free.  For the remaining joints, note that this is a local question and choose local coordinates of $D_{i_0...i_{p+1}}$ such that we may extend them to local coordinates of $D_{i_0...i_p}$ and $D_{i_0...i_{p-1}}$.  Then noting that the various wedges $dz_{j_1} \wedge ... \wedge dz_{j_r}$ form a local basis of $\Omega^r_{D_{i_0...i_p}}$, a form $(\alpha) \in \calE_r^p$ being $\delta$-closed implies the corresponding cocycles of holomorphic coefficients of the basis elements are each $\delta$-closed as well.  We can lift these cocycles to coboundaries by Lemma 3.1, thus giving a natural choice of a coboundary form $(\beta) \in \calE_r^{p-1}$ which $\delta$ sends to $(\alpha)$.
\end{proof}

\begin{rem}
The above lemmas generalize in a straightforward way to give such exact sequences for an SNC {\em{cycle}} on $X$.
\end{rem}

\section{The Spectral Sequence}
We first work out in detail a spectral sequence associated to a SNC divisor $D= \sum D_i$, and later indicate the 
adjustments needed in the more general case where $D_{(r)}= \sum r_iD_i$ is an effective divisor with SNC support.  
This spectral sequence for the SNC case (or one similar to it) has been worked out in \cite{del}, \cite{del1}, \cite{G2}, and \cite{KK} for deRham cohomology, in \cite{S2} for the structure sheaf $\calO_D$, and is mentioned in \cite{fr} for reduced forms $\tilde{\Omega}^r_D$.  We follow Stepanov's presentation in \cite{S2}, but work with the sheaf of reduced holomorphic $r$-forms, which for $r=0$ includes the structure sheaf.  As usual we assume that each intersection $D_{i_0...i_p}$ is irreducible.  We have by Lemma 3.2 that

\[0 \ra \tilde{\Omega}^r_D \ra \calE_r^0 \overset{\delta^0} \ra \calE_r^1 \overset{\delta^1} \ra \calE_r^2 \overset{\delta^2} \ra ...\]
is exact, where 
$\calE_r^p=\displaystyle \bigoplus_{i_0<...<i_p}\iota_* \Omega^r_{D_{i_0...i_p}}$.  This implies that the complexes 
\[\Omega^*: \tilde{\Omega}^r_D \ra 0 \ra 0 \ra ...\] and 

\[\calE_r^*: \calE_r^0 \ra \calE_r^1 \ra \calE_r^2 \ra ...\] are quasiisomorphic.  Clearly the hypercohomologies of 
$\Omega^*$ are just the cohomologies of $\tilde{\Omega}^r_D $, i.e. $\HH^i(\Omega^*) \cong H^i(D,\tilde{\Omega}^r_D)$.  To calculate the hypercohomologies of $\calE_r^*$, we take Dolbeault resolutions 

\[0 \ra \calE_r^p \ra \calE_r^{p,0} \overset{\bar \partial} \ra \calE_r^{p,1} \overset{\bar \partial} \ra \calE_r^{p,2} \overset{\bar \partial} \ra ...\] where 
$\calE_r^{p,q}=\displaystyle \bigoplus_{i_0<...<i_p}\iota_* \calA^{r,q}_{D_{i_0...i_p}}$, $\calA^{r,q}_{D_{i_0...i_p}}$ is the sheaf of differential 
forms of type $(r,q)$ on $D_{i_0...i_p}$ and the maps are the Dolbeault differentials $\bar \partial$.

Now consider the bigraded collection of $\CC$-vector spaces
\[C^{p,q}:=H^0(D,\calE_r^{p,q})=\bigoplus_{i_0<...<i_p}H^0(D_{i_0...i_p},\calA^{r,q}_{D_{i_0...i_p}})=\bigoplus_{i_0<...<i_p} A^{r,q}_{D_{i_0...i_p}}\] equipped with the Dolbeault differential $\bar \partial: C^{p,q} \rightarrow C^{p,q+1}$ and the combinatorial \v{C}ech differential 
$\delta: C^{p,q} \rightarrow C^{p+1,q}$ defined as follows: given $\alpha=(\alpha_{i_0...i_p}) \in C^{p,q}$, we set 
\[(\delta \alpha)_{i_0...i_{p+1}}=\sum_{j=0}^{p+1}(-1)^{q+j}(\alpha_{i_0...\hat{i_j}...i_{p+1}})_{|D_{i_0...i_{p+1}}}.\]  
We have $\bar{\partial}\delta + \delta \bar{\partial}=0$, so that $(C^{*,*},\delta,\bar \partial)$ is a bicomplex.  
Take $(C^*,d)$ to be the total complex, i.e. $C^m=\displaystyle \bigoplus_{p+q=m}C^{p,q}$ and $d=\delta + \bar \partial$.  We may now express the hypercohomologies of $\calE_r^*$ as the cohomologies of $(C^*,d)$, that is to say $\HH^i(\calE_r^*) \cong h^i(C^*,d)$.

The filtration $F^pC^m=\displaystyle \bigoplus_{\overset{p'+q=m} {p'\geq p}}C^{p',q}$ on the complex $(C^*,d)$ gives rise to a spectral sequence $E_*$ that converges to the cohomologies $h^*(C^*,d)$ (see \cite{GH} Ch.3, section 5) and

\[E_0^{p,q} \cong C^{p,q};\]
\[E_1^{p,q} \cong H^q_{\bar \partial}(C^{p,*});\]
\[E_2^{p,q} \cong H_{\delta}^p(H^q_{\bar \partial}(C^{*,*})).\]

We have that \[E_1^{p,q} \cong \bigoplus_{i_0<...<i_p} H^q_{\bar \partial}(A^{r,*}_{D_{i_0...i_p}}) \cong \bigoplus_{i_0<...<i_p} H^q(D_{i_0...i_p},\Omega^r_{D_{i_0...i_p}}).\]

Observe that the cochain complex 
\[0 \ra E_1^{0,q} \ra E_1^{1,q} \ra E_1^{2,q} \ra ...\] 
with the induced \v{C}ech differentials $\delta$ is isomorphic to the cochain complex 
\[0 \ra \bigoplus_i H^q(D_i,\Omega^r_{D_i}) \ra  \bigoplus_{i_0<i_1} H^q(D_{i_0i_1},\Omega^r_{D_{i_0i_1}}) \ra 
\bigoplus_{i_0<i_1<i_2} H^q(D_{i_0i_1i_2},\Omega^r_{D_{i_0i_1i_2}}) \ra ...\] that one uses to calculate the cohomologies of $\D(D)$ with 
coefficients in the presheaf $\calH^q(\Omega^r)$.  Hence \[E_2^{p,q} \cong H_{\delta}^p(E_1^{*,q}) \cong H^p(\D(D),\calH^q(\Omega^r)).\]

We now show how to get such a spectral sequence for the structure sheaf of an effective non-reduced divisor $D_{(r)}=\sum r_iD_i$ with 
SNC support. 
Since we have our exact sequence from Lemma 3.1 in this setting, the only missing ingredient is a Dolbeault type resolution of the 
non-reduced structure sheaves $\calO_{D^{(r)}_{i_0...i_p}}$, where $D^{(r)}_{i_0...i_p}=r_{i_0}D_{i_0} \cap ... \cap r_{i_p}D_{i_p}$ is the 
scheme-theoretic intersection defined by $\calI:=\calI^{r_{i_0}}_{i_0} + ... + \calI^{r_{i_p}}_{i_p} \subset \calO_X$.  If we have this, we may then construct the bicomplex $(C^{*,*},\delta,\bar \partial)$ to obtain the spectral sequence as in the SNC case above. Consider the following diagram:

\begin{diagram}
0 & \rTo & \calI & \rTo & \calI \calA^{0,0}_X & \rTo^{\bar \partial} & \calI \calA^{0,1}_X & \rTo^{\bar \partial} & ...\\
  &      & \dTo  &      & \dTo              &                   & \dTo               &                   &    \\
0 & \rTo & \calO_X & \rTo & \calA^{0,0}_X      & \rTo^{\bar \partial} & \calA^{0,1}_X        & \rTo^{\bar \partial} & ...\\
  &      & \dTo  &      & \dTo              &                   & \dTo               &                   &    \\
0 & \rTo & {\calO_X}/{\calI} & \rTo & {\calA^{0,0}_X}/{\calI \calA^{0,0}_X} & \rTo^{\bar \partial} & {\calA^{0,1}_X}/{\calI \calA^{0,1}_X} & \rTo^{\bar \partial} & ...
\end{diagram}

The diagram commutes because our ideal is locally generated by holomorphic functions, which commute with $\bar \partial$, and it is 
easy to see that $\bar \partial (\calI \calA_X^{0,q}) \subset \calI \calA_X^{0,q+1}$.  We would like to show that the third row is exact, which would give us the acyclic resolution that we need.  Indeed, each ${\calA^{0,q}_X}/{\calI \calA^{0,q}_X}$ is a quotient of acyclic sheaves and is hence acyclic for the global section functor.  Since the columns are all short exact and the second row is the Dolbeault resolution of $\calO_X$ (and is thus exact), it suffices by the Snake Lemma to show that the first row is exact.  To do this we simply reprove the Dolbeault lemma while keeping track of the holomorphic coefficients in $\calI$.

\begin{lem}
Let $\calI \subset \calO_X$ be a finitely generated ideal sheaf, and let $f=f(z_1,...,z_n) \in \calI \calA^{0,0}_X(U)$ be a $C^{\infty}$ function on an open subset $U \subset X$ which is holomorphic in the variables $z_l$, $l>q$.  Then there exists an open subset $V \subset U$ and a $C^{\infty}$ function $g \in \calI \calA^{0,0}_X(V)$, holomorphic in the variables $z_l$, $l>q$, such that $\frac {\partial g} {\partial {\bar z_q}} = f$.
\end{lem}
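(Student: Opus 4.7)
The plan is to adapt the classical Dolbeault--Grothendieck proof via a cut-off Cauchy transform in the variable $z_q$, while carefully tracking membership in $\calI$. I will first reduce to a small polydisc where $\calI$ has finitely many holomorphic generators, and then push the construction through these generators so that the resulting primitive stays in the ideal.

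After shrinking $U$ to a polydisc $V$ on which $\calI = (h_1,\dots,h_k)$ with $h_j$ holomorphic, I write $f = \sum_{j=1}^k h_j \phi_j$ with $\phi_j$ smooth. The crucial preliminary step is to arrange for each $\phi_j$ to itself be holomorphic in $z_l$ for all $l>q$. The relation $\partial f/\partial \bar z_l = 0$ forces $\sum_j h_j\,\partial \phi_j/\partial \bar z_l = 0$, so the tuple $(\partial \phi_j/\partial \bar z_l)_j$ is a smooth section of the coherent syzygy module $K := \ker(\calO_V^{k} \to \calI)$. A recursive application of the classical Dolbeault lemma inside $K$ (inducting on $n - q$ and solving the resulting $\bar\partial$-problem for the syzygies one variable at a time) allows me to replace $\phi_j \mapsto \tilde\phi_j := \phi_j + \eta_j$ with $(\eta_j)$ a smooth section of $K$, so that $\sum_j h_j \tilde\phi_j = f$ and each $\tilde\phi_j$ is holomorphic in $z_l$ for $l > q$. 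In the principal case $k=1$, no adjustment is needed: $h_1\,\partial\phi_1/\partial \bar z_l = 0$ forces $\partial\phi_1/\partial \bar z_l = 0$ off $\{h_1=0\}$, and hence everywhere by continuity.

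Once each $\tilde\phi_j$ is holomorphic in $z_l$ for $l > q$, I invoke the classical one-variable Dolbeault--Grothendieck lemma with parameters: with a cut-off $\chi \in C_c^\infty(\CC)$ equal to $1$ near the projection of a smaller polydisc $V' \subset V$ onto the $z_q$-axis, set
\[
\Phi_j(z) := \frac{1}{2\pi i} \int_{\CC} \frac{\chi(w)\,\tilde\phi_j(z_1,\dots,z_{q-1},w,z_{q+1},\dots,z_n)}{w - z_q}\, dw \wedge d\bar w.
\]
Then $\Phi_j$ is smooth on $V'$, satisfies $\partial \Phi_j/\partial \bar z_q = \tilde\phi_j$ by the Cauchy--Green identity, and is holomorphic in $z_l$ for $l>q$ by differentiation under the integral (the integrand has no $\bar z_l$-dependence for such $l$). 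Finally I set $g := \sum_j h_j \Phi_j \in \calI \calA^{0,0}_X(V')$; since the $h_j$ are holomorphic,
\[
\frac{\partial g}{\partial \bar z_q} = \sum_j h_j\,\frac{\partial \Phi_j}{\partial \bar z_q} = \sum_j h_j \tilde\phi_j = f, \qquad \frac{\partial g}{\partial \bar z_l} = 0 \text{ for } l > q.
\]

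The main obstacle is the syzygy-adjustment step. The naive Cauchy transform $g_0$ of $f$ is manifestly holomorphic in $z_l$ for $l > q$, but when one expands $h_j(\dots,w,\dots) = h_j(z) + (w-z_q) H_j(z,w)$ it splits as $g_0 = g_1 + g_2$ with $g_1 \in \calI$ but $g_2$ only holomorphic in $z_q$ and generally outside $\calI$; discarding $g_2$ destroys the holomorphy in the other variables. Forcing the $\phi_j$ themselves to be holomorphic in $z_l$ for $l > q$ via the syzygy module is exactly what restores both properties simultaneously, and this is where the coherence of $K$ and the recursive structure of the proof are essential.
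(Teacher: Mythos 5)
You have correctly identified a genuine gap in the paper's own argument. The paper writes $f = \sum x_j f_j$ with $f_j$ merely $C^\infty$, sets $g_j$ to be the Cauchy transform of $f_j$ in $z_q$, and $g := \sum x_j g_j$, then asserts $g$ is holomorphic in $z_l$ for $l > q$ ``since we may pass the $x_j$ $\ldots$ through the integral.'' This does not follow: differentiating under the integral and using holomorphy of $x_j$ gives
\[
\frac{\partial g}{\partial\bar z_l} = \sum_j x_j(z)\cdot\frac{1}{2\pi i}\int\frac{\partial f_j/\partial\bar z_l(\ldots,w_q,\ldots)}{w_q - z_q}\,dw_q\wedge d\bar w_q,
\]
and the cancellation $\sum_j x_j\,\partial f_j/\partial\bar z_l = \partial f/\partial\bar z_l = 0$ is available only with the $x_j$ evaluated at $w_q$, not at $z_q$. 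For an explicit failure take $n=2$, $q=1$, $\calI=(z_1,z_2)$, $f=z_1\bar z_1$, and the (bad) decomposition $f_1=\bar z_1 + z_2\bar z_2$, $f_2=-z_1\bar z_2$: one computes $\partial g/\partial\bar z_2 = z_2\bigl(z_1 T(1) - T(z_1)\bigr)$, a nonzero multiple of $z_2$, since $T(z_1)-z_1T(1)=\frac{1}{2\pi i}\int dw_q\wedge d\bar w_q \neq 0$. Your closing paragraph states the obstruction precisely: the Cauchy transform of $f$ is holomorphic in $z_l$, $l>q$, but not visibly in $\calI$, while $\sum_j x_j\,\mathrm{CT}(f_j)$ lies in $\calI$ but is generally not holomorphic in $z_l$, $l>q$.

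Your repair --- first adjusting the $\phi_j$ by a smooth syzygy so that each is separately holomorphic in $z_l$ for $l>q$, and only then multiplying by $h_j$ and Cauchy-transforming --- is exactly the missing ingredient; granted that step, the remainder of your proof coincides with the paper's. But the adjustment step, which is the entire content of the repair, is at present only a sketch, and as phrased it is close to circular: you propose to solve the $K$-valued $\bar\partial$-problem with $K=\ker(\calO^k\to\calI)$ by ``a recursive application of the classical Dolbeault lemma inside $K$,'' yet writing the $K$-valued data as a smooth combination of holomorphic generators of $K$ and Cauchy-transforming the coefficients recreates the original difficulty one step up the syzygy chain. A correct version should either induct along a finite free resolution of $\calI$ on the shrunk polydisc (Hilbert's syzygy theorem guarantees termination, and for a free module the Cauchy transform with holomorphic parameters applies componentwise with no adjustment needed), or else invoke a relative Dolbeault--Grothendieck lemma for coherent sheaves with $C^\infty$ parameters. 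As written the central new step is a gesture rather than a proof, but it is a gesture in the right direction and genuinely more than the paper itself supplies.
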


\begin{proof}
Write $\calI(U)=I=(x_1,...,x_k)$ and $f=\sum x_jf_j$, where the sum runs over the generators of $I$, and $f_j$ are $C^{\infty}$.  It is well known (see \cite{V} Prop 2.32) that locally we can find $C^{\infty}$ functions $g_j$ such that $\frac {\partial g_j} {\partial {\bar z_q}} = f_j$, by taking 
$g_j= \frac {1} {2 \pi i} \int \frac{f_j(z_1,...,w_q,...,z_n)} {w_q - z_q} dw_q \wedge d{\bar w_q}$.  Now set $g=\sum x_jg_j$.  Since the $x_j$ are holomorphic, we have that \[\frac {\partial g} {\partial {\bar z_q}} = \sum x_j \frac {\partial g_j} {\partial {\bar z_q}} = \sum x_jf_j =f.\]  Also, $g$ remains holomorphic in the variables $z_l$, $l>q$ since we may pass the $x_j$ and the operator $\frac {\partial} {\partial {\bar z_l}}$ through the integral.
\end{proof}

\begin{lem}
The first row in the above diagram is exact.
\end{lem}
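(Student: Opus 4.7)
The plan is to verify exactness joint-by-joint. Exactness at the leftmost $\calI$ is immediate from the injection $\calO_X \hookrightarrow \calA^{0,0}_X$. For exactness at $\calI\calA^{0,0}_X$, I need to show that if $f \in \calI\calA^{0,0}_X(U)$ satisfies $\bar\partial f = 0$, then $f \in \calI(U)$. Since $\ker\bar\partial = \calO_X$ inside $\calA^{0,0}_X$, this reduces to the identity $\calO_X \cap \calI\calA^{0,0}_X = \calI$ of subsheaves of $\calA^{0,0}_X$. I would establish this stalkwise by invoking Malgrange's theorem that the local ring of $C^{\infty}$ germs at a point is faithfully flat over the local ring of holomorphic germs; this implies $\calI\calA^{0,0}_X \cong \calI\otimes_{\calO_X}\calA^{0,0}_X$ and that a holomorphic germ lying in $\calI\calA^{0,0}_X$ must already lie in $\calI$.

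For exactness at $\calI\calA^{0,q}_X$ with $q\geq 1$, I would mimic the classical proof of the Dolbeault $\bar\partial$-Poincar\'e lemma, substituting Lemma 4.2 for the standard one-variable solvability result. Shrinking to a polydisc and expanding a $\bar\partial$-closed form $\omega \in \calI\calA^{0,q}_X(U)$ in coordinates as $\omega = \sum_{|J|=q}\omega_J\,d\bar z^J$ with each $\omega_J\in\calI\calA^{0,0}_X(U)$, I would induct on the largest index $k$ for which $d\bar z_k$ appears. Splitting $\omega = \alpha\wedge d\bar z_k + \beta$ with $\alpha,\beta$ involving only $d\bar z_1,\dots,d\bar z_{k-1}$, the relation $\bar\partial\omega=0$ forces every coefficient to be holomorphic in the variables $z_{k+1},\dots,z_n$. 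Applying Lemma 4.2 to each coefficient of $\alpha$ produces primitives in $\calI\calA^{0,0}_X$ that remain holomorphic in $z_{k+1},\dots,z_n$; assembling them gives $\eta\in\calI\calA^{0,q-1}_X$ with $\omega - \bar\partial\eta$ no longer involving $d\bar z_k$, which closes the induction.

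The main obstacle is the flatness fact underlying exactness at $\calI\calA^{0,0}_X$: the equality $\calO_X\cap\calI\calA^{0,0}_X=\calI$ cannot be proved by purely formal manipulation and genuinely requires Malgrange-type input. By contrast, the higher joints are routine once Lemma 4.2 is in hand, because the twin requirements of preserving $\calI$-coefficients and preserving holomorphicity in the remaining antiholomorphic variables are built into that lemma: writing a coefficient as $\sum_j x_jf_j$ and solving $\partial g_j/\partial\bar z_k = f_j$ by the standard Cauchy integral, the combination $g=\sum_j x_jg_j$ automatically lies in $\calI\calA^{0,0}_X$ and inherits holomorphicity in $z_\ell$ for $\ell\neq k$, because the $x_j$ are holomorphic and commute with both differentiation in $\bar z_\ell$ and the integral.
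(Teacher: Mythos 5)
Your proposal is correct and, for the joints $\calI\calA^{0,q}_X$ with $q\geq 1$, matches the paper's intent exactly: rerun the standard $\bar\partial$-Poincar\'e argument (induction on the highest antiholomorphic index appearing), substituting the paper's one-variable solvability lemma (its Lemma~4.1) for the classical one so that the primitives stay in $\calI\calA^{0,0}_X$ and remain holomorphic in the later variables. Where you improve on the paper's one-sentence proof is the $q=0$ joint. Exactness there requires $\calO_X\cap\calI\calA^{0,0}_X=\calI$, and you are right that this is \emph{not} a formal consequence of ``the same proof with $\calI$-coefficients'': the classical Dolbeault lemma at $q=0$ only says a $\bar\partial$-closed $(0,0)$-form is holomorphic, which here would give only $f\in\calO_X$, not $f\in\calI$. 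Some genuine analytic input is needed, and Malgrange's theorem (flatness of $C^\infty$ germs over holomorphic germs, hence faithful flatness since both are local with compatible maximal ideals) delivers exactly $\calO_X\cap\calI\calA^{0,0}_X=\calI$ stalkwise, as you say. This $q=0$ case is genuinely used in the paper's Snake-Lemma step: $H^0$ of the first row maps isomorphically onto the kernel of $\calO_X/\calI\to\calA^{0,0}_X/\calI\calA^{0,0}_X$, so without it the third row would fail to resolve $\calO_X/\calI$. One small alternative worth knowing: instead of citing Malgrange, one can argue by Taylor expansion at a point $p$. If $h=\sum_j x_j f_j$ with $h,x_j$ holomorphic and $f_j$ smooth, taking Taylor series and extracting the $(\bar z-\bar p)$-degree-zero part gives $T_p(h)\in\calI_p\cdot\CC[[z-p]]=\calI_p\hat\calO_p$, and then the classical faithful flatness of $\hat\calO_p$ over $\calO_p$ yields $h_p\in\calI_p$. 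This avoids the Malgrange machinery and uses only standard commutative algebra, though your route is equally valid.
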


\begin{proof}
The proof is identical to that of the Dolbeault lemma (see \cite{V} Prop 2.31 and Prop 2.36) which uses a lemma similar to the one 
above, the only addition being that the coefficients of the forms in question are always in $\calI \calA^{0,0}_X$.
\end{proof}

Let $\calA^{0,q}_{D^{(r)}_{i_0...i_p}}$ denote ${\calA^{0,q}_X}/{\calI \calA^{0,q}_X}$, where $\calI:=\calI^{r_{i_0}}_{i_0} + ... + \calI^{r_{i_p}}_{i_p}$ is the $\calO_X$ ideal that defines the scheme-theoretic intersection $D^{(r)}_{i_0...i_p}=r_{i_0}D_{i_0} \cap ... \cap r_{i_p}D_{i_p}$ in $X$.  Also let $\calC^{p,q}=\displaystyle \bigoplus_{i_0<...i_p} \calA^{0,q}_{D^{(r)}_{i_0...i_p}}$ and let 
$C^{p,q}=\displaystyle \bigoplus_{i_0<...i_p} A^{0,q}_{D_{i_0...i_p}}$ denote the global sections.

Thus, we have our spectral sequence for the structure sheaf of an effective non-reduced divisor with SNC support. We now give a quick 
Hodge-theoretic proof of the second page degeneration of our spectral sequence in the reduced case, similar to the one given in 
\cite{del1}.

\begin{lem}[$\partial \bar \partial$-lemma]
Suppose $\omega$ is a global $(r,q)$ form on a K\"ahler manifold $D_{i_0...i_p}$ that is both $\partial$ and $\bar \partial$ closed.  If $\omega$ is $\bar \partial$ exact or $\partial$ exact, then $\omega= \partial \bar \partial \gamma$, for some form $\gamma$ of type $(r-1,q-1)$.
\end{lem}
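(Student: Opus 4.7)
The plan is to apply the standard Hodge-theoretic machinery on the compact K\"ahler manifold $D_{i_0\ldots i_p}$, which inherits its K\"ahler structure from $X$ by restriction of the K\"ahler form and is compact. The key inputs are the K\"ahler identities $\Delta_d = 2\Delta_\partial = 2\Delta_{\bar \partial}$ (so the three notions of harmonic form coincide and every harmonic form is annihilated by each of $\partial,\bar \partial,\partial^*,\bar \partial^*$), together with the anticommutator relations $\partial \bar \partial^* + \bar \partial^* \partial = 0$ and $\bar \partial \partial^* + \partial^* \bar \partial = 0$. Let $H$ denote harmonic projection and $G$ the Green's operator of $\Delta_{\bar \partial} = \Delta_\partial$; then $G$ commutes with each of $\partial,\bar \partial,\partial^*,\bar \partial^*$.

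The two cases are symmetric (if $\omega = \partial \alpha$ is $\partial$-exact, then $\bar\omega$ is $\bar \partial$-exact, $d$-closed, and of type $(q,r)$; after proving the $\bar \partial$-exact case and applying it to $\bar\omega$, one conjugates back and absorbs the sign from $\bar \partial \partial = -\partial \bar \partial$ into $\gamma$). So I would treat $\omega = \bar \partial \alpha$. First, $H\omega = 0$: for any harmonic $h$ one has $\langle \omega, h \rangle = \langle \bar \partial \alpha, h \rangle = \langle \alpha, \bar \partial^* h \rangle = 0$. Consequently the $\bar \partial$-Hodge decomposition $\omega = H\omega + \bar \partial \bar \partial^* G \omega + \bar \partial^* \bar \partial G \omega$ collapses (its last summand vanishes because $\bar \partial G \omega = G \bar \partial \omega = 0$) to $\omega = \bar \partial \bar \partial^* G \omega$. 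Set $\eta := \bar \partial^* G \omega$, a form of type $(r, q-1)$, so that $\omega = \bar \partial \eta$.

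Now apply the $\partial$-Hodge decomposition to $\eta$: $\eta = H\eta + \partial \partial^* G \eta + \partial^* \partial G \eta$. Then
\[ \omega \;=\; \bar \partial \eta \;=\; \bar \partial H\eta \;+\; \bar \partial \partial \partial^* G \eta \;+\; \bar \partial \partial^* \partial G \eta. \]
The first summand vanishes since harmonic forms are $\bar \partial$-closed. The third summand equals
\[ \bar \partial \partial^* \partial G \eta \;=\; -\partial^* \bar \partial \partial G \eta \;=\; -\partial^* G(\bar \partial \partial \eta) \;=\; \partial^* G(\partial \bar \partial \eta) \;=\; \partial^* G(\partial \omega) \;=\; 0, \]
using successively the K\"ahler identity $\bar \partial \partial^* = -\partial^* \bar \partial$, the commutation of $G$ with $\partial$ and $\bar \partial$, the relation $\bar \partial \partial = -\partial \bar \partial$, and the hypothesis $\partial \omega = 0$. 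The remaining middle summand rewrites as $\bar \partial \partial \partial^* G \eta = -\partial \bar \partial \partial^* G \eta = \partial \bar \partial \gamma$ with
\[ \gamma \;:=\; -\partial^* G \eta \;=\; -\partial^* G \bar \partial^* G \omega, \]
a form of type $(r-1, q-1)$, giving the required expression $\omega = \partial \bar \partial \gamma$.

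The only real obstacle is bookkeeping: tracking the signs produced by the K\"ahler anticommutators and invoking the commutation of $G$ with the first-order operators in the correct order. The vanishing $H\omega = 0$ needs only $\bar \partial$-exactness together with $\bar \partial^* h = 0$ on harmonics; the $\partial$-closedness of $\omega$ enters only in the cancellation of the third summand above, and symmetrically for the $\partial$-exact case.
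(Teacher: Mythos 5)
Your proof is correct and follows exactly the Hodge-theoretic route the paper delegates to its citations (Voisin Prop.\ 6.17 and Deligne--Griffiths--Morgan--Sullivan): Hodge decomposition with harmonic projection and Green's operator on the compact K\"ahler manifold, combined with the K\"ahler anticommutators $\partial\bar\partial^* = -\bar\partial^*\partial$, $\bar\partial\partial^* = -\partial^*\bar\partial$. You simply supply in full the argument the paper states in one sentence and references; the sign bookkeeping, the use of $\partial$-closedness only to kill the third summand, and the conjugation reducing the $\partial$-exact case to the $\bar\partial$-exact one are all handled correctly.
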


\begin{proof}
See either \cite{del1} or \cite{V} Prop 6.17.  The idea is to use the decomposition of the space of forms into the kernel and image of the Laplacian, and employ the K\"ahler identities.
\end{proof}

Now let $K^{p,q}=\displaystyle \bigoplus_{i_0<...<i_p} \ker \partial \subset  \displaystyle \bigoplus_{i_0<...<i_p} A^{r,q}_{D_{i_0...i_p}}= C^{p,q}$, where $\partial: A^{r,q} \ra A^{r+1,q}$ is our usual operator and let 
$H^{p,q}=\displaystyle \bigoplus_{i_0<...<i_p} H^q(D_{i_0...i_p},\Omega^r_{D_{i_0...i_p}})$.  Consider the natural maps of double complexes 

\[(H^{*,*},\delta,0) \leftarrow  (K^{*,*},\delta,\bar \partial) \into (C^{*,*},\delta,\bar \partial)\]

where the first map is via the identification of $\partial$ and $\bar \partial$ cohomology on a K\"ahler manifold.  We claim that the induced maps on the first pages of the associated spectral sequences is an isomorphism.  Indeed, we saw above that the first page of the spectral sequence associated to $C^{*,*}$ is just $E^{p,q}_1 \cong \displaystyle \bigoplus_{i_0<...<i_p} H^q_{\bar \partial}(C^{p,*})=\displaystyle \bigoplus_{i_0<...<i_p} H^q(D_{i_0...i_p},\Omega^r_{D_{i_0...i_p}})$, so it suffices to show that the natural map 
\[{\ker \partial \cap \ker \bar \partial}/{\ker \partial \cap \im \bar \partial}= {\ker \partial \cap \ker \bar \partial}/{\im \partial \bar \partial}\ra H^q(D_{i_0...i_p},\Omega^r_{D_{i_0...i_p}}) \cong \ker \bar \partial / \im \bar \partial\] is an isomorphism for each $i_0<...<i_p$.  To see injectivity, given a $(r,q)$ form $\alpha \in \ker \partial \cap \ker \bar \partial$, it maps to $0$ in cohomology iff $\alpha$ is $\bar \partial$ exact.  Applying the $\partial \bar \partial$-lemma gives $\alpha= \partial \bar \partial \gamma$, hence $\alpha=0$.  For surjectivity, given a $\bar \partial$ closed form $\beta$, we may apply the $\partial \bar \partial$-lemma to $\partial \beta$ and conclude that $\partial \beta = \partial \bar \partial \gamma$.  Then $\beta - \bar \partial \gamma$ also represents $[\beta]$ and lies in $\ker \partial \cap \ker \bar \partial$, and hence maps to $[\beta]$.

So the induced map on first pages is an isomorphism, hence all higher pages are isomorphic by the Zeeman Comparison Theorem, see \cite{W} Theorem 5.2.12.  However, the total differential of $H^{*,*}$ is just $\delta$, so the corresponding spectral sequence must collapse at the second page, and hence so must the one corresponding to $C^{*,*}$.   

So our spectral sequence degenerates at the second page, i.e. $E_2=E_{\infty}$.  Our calculations give 
\[H^i(D,\tilde{\Omega}^r_D) \cong \HH^i(\Omega^*) \cong \HH^i(\calE_r^*) \cong h^i(C^*,d) \cong \bigoplus_{p+q=i}E_2^{p,q} \cong \bigoplus_{p+q=i}H^p(\D(D),\calH^q(\Omega^r)).\]

Taking $r=0$ we see that \[H^i(\D,\CC) \cong H^i(\D,\calH^0(\calO)) \cong E^{i,0}_2 \cong E^{i,0}_{\infty} \into H^i(D,\calO_D).\]

A very similar argument holds for the second page degeneration of the spectral sequence for the deRham cohomology of a SNC divisor on a K\"ahler manifold, giving the same kind of decomposition.  One shows that the sequence of $\calO_D$-modules
\[ 0 \ra \CC \ra \bigoplus_i \iota_* \CC \overset{\delta^0} \ra \bigoplus_{i_0<i_1} \iota_* \CC \overset{\delta^1} \ra \bigoplus_{i_0<i_1<i_2} \iota_* \CC \overset{\delta^2} \ra ... \]
is exact, resolves each $\CC$ on $D_{i_0...i_p}$ by the deRham resolution
\[ 0 \ra \CC \ra \calA^0_{D_{i_0...i_p}} \overset{d^0} \ra \calA^1_{D_{i_0...i_p}} \overset{d^1} \ra \calA^2_{D_{i_0...i_p}} \overset{d^2} \ra ... \]
and sets $C^{p,q}:= \displaystyle \bigoplus_{i_0<...<i_p} H^0(D_{i_0...i_p},\calA^q_{D_{i_0...i_p}})$.  Then $(C^{*,*},d,\delta)$ is a bicomplex whose total complex $(C^*,d + \delta)$ has the same cohomology as $H_{dR}^*(D,\CC)$.  The usual filtration on the bicomplex gives a spectral sequence as above, and one argues that this degenerates at the second page either by hand, using the Hodge decomposition and the $\partial \bar \partial$-lemma, or more compactly by considering three double complexes as above.  In the latter scenario, the middle bicomplex is associated to $\ker d^c$, where $d^c= \sqrt{-1}(\bar \partial - \partial)$, and the $dd^c$-lemma is used, which is equivalent to the $\partial \bar \partial$-lemma.  For details of either argument, see \cite{KK} or \cite{del1} respectively.    

\newpage

\section{Main Results, Conjectures, and Examples}

Applying the spectral sequence discussed in the previous section to the exact sequences from the lemmas in Section 3 yields formulas,
 which are essentially a reformulation of Deligne's result for deRham cohomology and Stepanov's result for the structure sheaf cohomology of a SNC divsor in the language of cohomologies of presheaves on the associated dual simplicial complex.  We summarize them now.  

\begin{thm}
Suppose $X$ is a K\"ahler manifold and let $D=\sum D_i$ be a reduced SNC divisor on $X$ with 
each $D_{i_0}\cap ... \cap D_{i_p}$ irreducible, and denote by $\D=\D(D)$ the dual simplicial complex.  Then we have the following 
isomorphisms:

\[ H^i_{dR}(D,\CC) \cong \bigoplus_{p+q=i} H^p(\D,\calH^q_{dR}(\CC)) \]

\[ H^i(D,\tilde{\Omega}^r_D) \cong \bigoplus_{p+q=i} H^p(\D,\calH^q(\Omega^r)).\]

In particular, for $r=0$, we have

\[ H^i(D,\calO_D) \cong \bigoplus_{p+q=i} H^p(\D,\calH^q(\calO)).\]

\end{thm}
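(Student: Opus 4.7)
The plan is to construct, for each of the three sheaves in question, a bicomplex whose total cohomology computes the cohomology on the left, and whose $E_2$-page computes the simplicial presheaf cohomology on the right; the degeneration at $E_2$ then delivers the claimed direct-sum decomposition. The construction is already essentially laid out in Section 4, so what remains is to assemble the ingredients uniformly.

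First, for each of the coefficient systems $\tilde{\Omega}^r_D$ (including $\calO_D$ as $r=0$) and the constant sheaf $\CC$ on $D$, I would write down the Čech-style resolution by sheaves supported on the strata. For reduced forms this is exactly Lemma 3.2, giving the resolution $0 \to \tilde{\Omega}^r_D \to \calE_r^0 \to \calE_r^1 \to \cdots$ with $\calE_r^p = \bigoplus_{i_0<\cdots<i_p} \iota_\ast \Omega^r_{D_{i_0\ldots i_p}}$. For the constant sheaf the analogous resolution $0 \to \CC \to \bigoplus_i \iota_\ast \CC \to \bigoplus_{i<j} \iota_\ast \CC \to \cdots$ holds by the same combinatorial argument. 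Next, resolve each $\Omega^r_{D_{i_0\ldots i_p}}$ by its Dolbeault complex $\calA^{r,\bullet}_{D_{i_0\ldots i_p}}$ (respectively, each constant sheaf by the smooth deRham complex), take global sections, and form the bicomplex $C^{p,q} = \bigoplus_{i_0<\cdots<i_p} A^{r,q}_{D_{i_0\ldots i_p}}$ with commuting differentials $\delta$ and $\bar\partial$ (signs adjusted to anticommute). A standard hypercohomology argument then identifies the cohomology of the total complex with $H^i(D,\tilde{\Omega}^r_D)$, and the column spectral sequence has
\[
E_1^{p,q} \cong \bigoplus_{i_0<\cdots<i_p} H^q(D_{i_0\ldots i_p}, \Omega^r_{D_{i_0\ldots i_p}}),
\]
whose horizontal differential is visibly the presheaf Čech differential, so $E_2^{p,q} \cong H^p(\D, \calH^q(\Omega^r))$, and similarly for the other two cases.

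The main obstacle is to show that this spectral sequence degenerates at $E_2$, because without degeneration one only gets a filtration rather than a direct-sum decomposition. For the holomorphic cases I would run the three-bicomplex argument already sketched at the end of Section 4: compare $(C^{*,*},\delta,\bar\partial)$ with the subcomplex $(K^{*,*},\delta,\bar\partial)$ of $\partial$-closed forms and with $(H^{*,*},\delta,0)$, and use the $\partial\bar\partial$-lemma on each smooth projective $D_{i_0\ldots i_p}$ (which is Kähler since it is a closed submanifold of the Kähler $X$) to show the $E_1$-pages of these three spectral sequences are isomorphic. The spectral sequence of $(H^{*,*},\delta,0)$ is trivially concentrated, so it degenerates at $E_2$, and Zeeman comparison propagates this to $(C^{*,*},\delta,\bar\partial)$. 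For the deRham case the identical argument works with the double complex $C^{p,q} = \bigoplus_{i_0<\cdots<i_p} A^q_{D_{i_0\ldots i_p}}$ carrying $d$ and $\delta$, using the $dd^c$-lemma (equivalent to the $\partial\bar\partial$-lemma) in place of the naive $\partial\bar\partial$-argument.

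Finally, once $E_2 = E_\infty$ is established, the convergence of the spectral sequence yields a filtration on $H^i$ whose associated graded is $\bigoplus_{p+q=i} E_2^{p,q}$. To promote this to a genuine direct sum isomorphism (as opposed to merely an associated graded identification), I would invoke the standard consequence of the Hodge decomposition on each $D_{i_0\ldots i_p}$: the comparison maps from the three bicomplexes above give a natural splitting on cohomology, so the filtration splits, yielding the stated direct-sum decomposition. Assembling this for $\calH^q_{dR}(\CC)$, $\calH^q(\Omega^r)$, and its $r=0$ specialization $\calH^q(\calO)$ completes the proof of all three isomorphisms.
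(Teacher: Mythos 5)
Your proposal is correct and follows essentially the same route as the paper: the Čech-style stratification resolutions (Lemma 3.2 and its deRham analogue), the Dolbeault/deRham bicomplex, identification of the $E_1$ and $E_2$ pages with the presheaf cochain complex, and degeneration at $E_2$ via the three-bicomplex comparison $(H^{*,*},\delta,0) \leftarrow (K^{*,*},\delta,\bar\partial) \hookrightarrow (C^{*,*},\delta,\bar\partial)$ using the $\partial\bar\partial$-lemma together with Zeeman comparison, and the $dd^c$-lemma in the deRham case. (One small note: your parenthetical calls $D_{i_0\ldots i_p}$ "smooth projective," but it need only be compact Kähler, as you correctly justify in the same breath, and over a field the filtration on $H^i$ splits automatically, so the extra care about promoting the associated graded to a direct sum is unnecessary though harmless.)
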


These formulas are interesting in their own right because they show that no new cohomological information is created when smooth 
divisors are added to one another in a normal crossings fashion, provided we know how they intersect 
via the dual incidence complex $\D(D)$.  We also recover a Hodge decomposition for a SNC divisor on a K\"ahler manifold.

\begin{cor}
In the setting above, we have an isomorphism:

\[ H^i_{dR}(D,\CC) \cong \bigoplus_{r+q=i} H^q(D,\tilde{\Omega}^r_D) \]

\end{cor}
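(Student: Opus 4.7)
The plan is to deduce the Hodge decomposition for $D$ from Theorem 5.1 by applying the classical Hodge decomposition on each stratum $D_{i_0...i_p}$ and then rearranging the resulting sum. Concretely, each intersection $D_{i_0...i_p}$ is a closed complex submanifold of the K\"ahler manifold $X$, hence itself a compact K\"ahler manifold, so the classical Hodge theorem supplies a canonical isomorphism
\[ H^q_{dR}(D_{i_0...i_p},\CC) \;\cong\; \bigoplus_{r+s=q} H^s(D_{i_0...i_p},\Omega^r_{D_{i_0...i_p}}). \]

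The first step is to promote this stratum-by-stratum decomposition to an isomorphism of presheaves on $\D$, i.e.\ $\calH^q_{dR}(\CC) \cong \bigoplus_{r+s=q} \calH^s(\Omega^r)$. For this I would check that the restriction morphisms along an inclusion $D_{i_0...i_p} \into D_{i_0...\widehat{i_k}...i_p}$ respect the Hodge decomposition. This is standard: pullback by a holomorphic map sends $(r,s)$-forms to $(r,s)$-forms and commutes with $\partial$ and $\bar\partial$, so it induces maps on Dolbeault cohomology that assemble into the Hodge decomposition componentwise. Thus the direct sum decomposition of vector spaces is natural in the simplicial inclusions, which is exactly the presheaf compatibility condition from Definition 2.1.

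The second step is to combine this with the two formulas of Theorem 5.1. Simplicial cohomology with presheaf coefficients commutes with finite direct sums (the \v Cech complex is built as a direct sum, so taking $H^p$ distributes), so
\[ H^i_{dR}(D,\CC) \cong \bigoplus_{p+q=i} H^p(\D,\calH^q_{dR}(\CC)) \cong \bigoplus_{p+r+s=i} H^p(\D,\calH^s(\Omega^r)). \]
On the other side, applying the second isomorphism of Theorem 5.1 to each summand gives
\[ \bigoplus_{r+q=i} H^q(D,\tilde{\Omega}^r_D) \cong \bigoplus_{r+q=i}\bigoplus_{p+s=q} H^p(\D,\calH^s(\Omega^r)) \cong \bigoplus_{p+r+s=i} H^p(\D,\calH^s(\Omega^r)). \]
Comparing the two expressions term by term yields the desired isomorphism.

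The only real point that requires care, and the step I expect to be the main obstacle to a clean writeup, is verifying that the Hodge decomposition at the level of presheaves is canonical enough to respect the \v{C}ech differentials with the correct signs, so that the two spectral sequences used to prove the two parts of Theorem 5.1 really assemble into the same grading on $H^i_{dR}(D,\CC)$. Once one observes that both decompositions ultimately come from the single triple-graded object $\bigoplus_{p,r,s} H^p(\D,\calH^s(\Omega^r))$ filtered by total degree $p+r+s=i$, the identification is forced, and the corollary follows.
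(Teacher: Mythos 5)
Your proposal is correct and follows essentially the same route as the paper: apply the classical Hodge decomposition on each stratum to obtain a presheaf isomorphism $\calH^q_{dR}(\CC) \cong \bigoplus_{r+s=q}\calH^s(\Omega^r)$, then push through both formulas of Theorem 5.1 and reindex the resulting triple direct sum over $p+r+s=i$. The worry you raise at the end about matching the gradings of the two spectral sequences is already taken care of once everything is reduced, as you do, to plain vector-space isomorphisms coming from Theorem 5.1 plus the presheaf-level Hodge decomposition; the paper treats this step just as lightly.
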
  

\begin{proof}
The Hodge decomposition for each $D_{i_0...i_p}$ gives natural isomorphisms 
\[ H^q_{dR}(D_{i_0...i_p},\CC) \cong \bigoplus_{r+s=q} H^s(D_{i_0...i_p},\Omega^r_{D_{i_0...i_p}})\]
for every $i_0<...<i_p$ so we have isomorphisms $\calH_{dR}^q(\CC) \cong \displaystyle \bigoplus_{r+s=q} \calH^s(\Omega^r)$ as presheaves 
on $\D$.  Using Theorem 5.1 above we have a chain of isomorphisms

\[ H^i_{dR}(D,\CC) \cong \bigoplus_{p+q=i} H^p(\D,\calH^q_{dR}(\CC)) \cong \bigoplus_{p+q=i} H^p(\D,\bigoplus_{r+s=q} \calH^s(\Omega^r)) 
\cong \bigoplus_{p+r+s=i} H^p(\D, \calH^s(\Omega^r)) \] \[\cong \bigoplus_{r=0}^i \bigoplus_{p+s=i-r} H^p(\D, \calH^s(\Omega^r)) \cong 
\bigoplus_{r=0}^i H^{i-r}(D,\tilde{\Omega}^r_D) \cong \bigoplus_{r+q=i} H^q(D,\tilde{\Omega}^r_D). \]
\end{proof}

We make the following conjecture for the more general non-reduced case.

\begin{conj}
In the setting of Theorem 5.1, let $D_{(r)}=\sum r_iD_i$ be an effective non-reduced divisor with SNC support on $X$, assume 
each $D_{i_0}\cap ... \cap D_{i_p}$ irreducible, and denote by $\D=\D(D_{(r)})$ the dual simplicial complex.  Then the spectral sequence 
constructed in Section 4 for the structure sheaf $\calO_{D_{(r)}}$ degenerates at the second page, yielding the isomorphism

\[ H^i(D,\calO_{D_{(r)}}) \cong \bigoplus_{p+q=i} H^p(\D,\calH^q(\calO_{(r)})).\]
\end{conj}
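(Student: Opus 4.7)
The entire mechanism of Section 4 goes through verbatim in the non-reduced setting: Lemma 3.1 provides the exact Čech resolution of $\calO_{D_{(r)}}$ by $\calC^p = \bigoplus_{i_0<\dots<i_p} \iota_* \calO_{D^{(r)}_{i_0\dots i_p}}$, and Lemmas 4.1--4.3 furnish a Dolbeault-style acyclic resolution $\calA^{0,q}_X/\calI\calA^{0,q}_X$ of each $\calO_{D^{(r)}_{i_0\dots i_p}}$. Assembling these into the bicomplex $(C^{p,q},\delta,\bar\partial)$ and filtering by $p$ produces a spectral sequence converging to $H^i(D,\calO_{D_{(r)}})$ with $E_1^{p,q} \cong \bigoplus H^q(D^{(r)}_{i_0\dots i_p},\calO_{D^{(r)}_{i_0\dots i_p}})$ and $E_2^{p,q} \cong H^p(\D,\calH^q(\calO_{(r)}))$. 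Thus the entire conjecture reduces to showing that this spectral sequence degenerates at the second page.

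The plan is to attempt this degeneration by reduction to the SNC case via the nilpotent filtration. Let $\calN \subset \calO_{D_{(r)}}$ be the nilradical; it is locally generated by the ideals $\calI_i$ and admits a finite decreasing filtration $\calO_{D_{(r)}} \supset \calN \supset \calN^2 \supset \cdots \supset \calN^K=0$ whose successive quotients $\calN^k/\calN^{k+1}$ are coherent sheaves on the underlying reduced SNC divisor $D$, in fact direct sums of restrictions of coherent $\calO_X$-modules to the smooth strata $D_{i_0\dots i_p}$. The strategy is to run the bicomplex construction for each graded piece $\calN^k/\calN^{k+1}$, prove $E_2$-degeneration for these pieces via a Hodge-theoretic argument on the Kähler strata (the $\partial\bar\partial$-lemma applied to smooth twisted coefficients, which is available because $\calN^k/\calN^{k+1}$ is a locally free sheaf on each $D_{i_0\dots i_p}$ arising from conormal bundles), and then propagate the degeneration up the short exact sequences
\[
0 \to \calN^{k+1} \to \calN^k \to \calN^k/\calN^{k+1} \to 0
\]
by an inductive five-lemma / Zeeman comparison argument on the induced morphism of spectral sequences. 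The base of the induction on $k=K$ is empty, and the top layer $\calO_{D_{(r)}}/\calN = \calO_D$ is exactly the reduced case already handled by Theorem 5.1.

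The main obstacle will be the propagation step: producing a $\partial\bar\partial$-type relation in the quotient $\calA^{0,*}_X/\calI\calA^{0,*}_X$ that is compatible with both differentials of the bicomplex, when the Hodge decomposition only exists after passing to the graded pieces. Concretely, one must argue that a class in $E_2$ which becomes trivial after mapping to a graded quotient lifts to something $\partial$-closed at the cochain level, and then invoke the $\partial\bar\partial$-lemma on each stratum with coefficients in the relevant nilpotent piece to kill it. The cleanest alternative, which I would pursue in parallel, is to reinterpret the whole bicomplex via the logarithmic de Rham complex of $(X,D_{(r)})$ and appeal to the known degeneration of the log Hodge-to-de Rham spectral sequence (Deligne, Steenbrink, Illusie); this would bypass the need for a bespoke $\partial\bar\partial$-lemma for non-reduced schemes, at the cost of translating between the log-analytic formalism and the Čech bicomplex used here. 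Either route reduces the conjecture to a single Hodge-theoretic statement about coherent sheaves on Kähler strata, and that reduction is where the real difficulty lies.
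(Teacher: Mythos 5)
This statement is Conjecture 5.3, which the paper does not prove; immediately after stating it, the author writes that it ``seems quite hard, since we do not have the $\partial$ operator and we cannot do Hodge theory,'' and notes that the same obstacle arises when ``trying to give such a formula for an arbitrary vector bundle on a (reduced) SNC divisor $D$.'' Your proposal identifies a reasonable reduction (the nilpotent filtration of $\calO_{D_{(r)}}$ with graded pieces that are coherent sheaves on the reduced strata), but it does not get past the obstacle the paper points at, and it contains a specific false step.

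The false step is the claim that the $\partial\bar\partial$-lemma ``is available'' for forms with coefficients in the locally free sheaves $\calN^k/\calN^{k+1}$ on the K\"ahler strata $D_{i_0\dots i_p}$. The $\partial\bar\partial$-lemma requires both a $\partial$ operator and the K\"ahler identities for coefficient-valued forms, which hold for constant coefficients (and more generally for flat bundles or polarized variations of Hodge structure), but not for an arbitrary holomorphic vector bundle. The graded pieces restrict on each stratum to tensor powers of conormal bundles, which are not flat and carry no canonical $\partial$ operator; the associated Hodge-to-Dolbeault spectral sequence with such twisted coefficients does not degenerate in general. So the base of your induction is not in place. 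The propagation step is a second, independent gap: degeneration of the spectral sequences of the graded pieces does not force degeneration for the filtered object. The short exact sequences $0 \to \calN^{k+1} \to \calN^k \to \calN^k/\calN^{k+1} \to 0$ give morphisms of spectral sequences that are neither injective nor surjective on $E_1$, so the Zeeman comparison argument used in Section 4 (which crucially requires an isomorphism on $E_1$) does not apply, and one would need to control connecting maps and extensions in a way that is not supplied. Finally, the log de Rham alternative computes the mixed Hodge structure on $H^*_{dR}(X \setminus D)$, which is not the object $H^i(D,\calO_{D_{(r)}})$ appearing in the conjecture; bridging that gap is itself nontrivial and is not a rewording of the same spectral sequence.
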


This seems quite hard, since we do not have the $\partial$ operator and we cannot do Hodge theory.  One encounters the same problem trying to give such a formula for an arbitrary vector bundle on a (reduced) SNC divisor $D$.

\subsection{The Toric Setting}
In the setting where $X$ is a smooth projective toric variety $/\CC$ and $D$ is a torus-invariant SNC divisor, 
our formula simplifies greatly, and we see that the Zariski cohomology of the structure sheaf $\calO_D$ is precisely the simplicial 
cohomology of $\D(D)$.

\begin{cor}
In the toric setting above, we have an isomorphism 
\[H^i(D,\calO_D) \cong H^i(\D(D),\CC).\]
\end{cor}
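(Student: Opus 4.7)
The plan is to apply Theorem 5.1 and show that in the toric setting the double-sum on the right collapses into a single term. By Theorem 5.1 we have
\[ H^i(D,\calO_D) \cong \bigoplus_{p+q=i} H^p(\D,\calH^q(\calO)), \]
so it suffices to prove that the presheaf $\calH^q(\calO)$ vanishes for $q>0$ and is the constant presheaf $\CC$ for $q=0$.

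The $q=0$ statement was already observed in the excerpt: since each $D_{i_0\ldots i_p}$ is irreducible and projective, $H^0(D_{i_0\ldots i_p},\calO)\cong\CC$ and the restriction maps are all identities, so $\calH^0(\calO)\cong\CC$. For $q>0$, the key point is that because $D=\sum D_i$ is a torus-invariant SNC divisor on the smooth toric variety $X$, each prime component $D_i$ is the closure of a codimension-one torus orbit, and each nonempty intersection $D_{i_0\ldots i_p}=D_{i_0}\cap\cdots\cap D_{i_p}$ is an orbit closure of codimension $p+1$ in $X$. These orbit closures are themselves smooth projective toric varieties (corresponding to the star of the appropriate cone in the fan of $X$).

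Now I invoke the classical vanishing result of Demazure: for any complete toric variety $Y$ one has $H^q(Y,\calO_Y)=0$ for $q>0$. Applying this to each $Y=D_{i_0\ldots i_p}$, we conclude that $\calH^q(\calO)(\D_{i_0\ldots i_p})=0$ for all simplices and all $q>0$, so $\calH^q(\calO)$ is the zero presheaf for $q>0$. Therefore in the decomposition only the $q=0$ term survives, giving
\[ H^i(D,\calO_D) \cong H^i(\D,\calH^0(\calO)) \cong H^i(\D(D),\CC). \]

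There is no real obstacle here beyond identifying torus-invariant strata with orbit closures and quoting Demazure's vanishing theorem; once Theorem 5.1 is granted, the toric hypothesis does all the work by forcing the presheaf $\calH^q(\calO)$ to be concentrated in degree $q=0$.
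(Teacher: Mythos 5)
Your proof is correct and takes essentially the same route as the paper: apply Theorem 5.1, observe $\calH^0(\calO)\cong\CC$, and kill the $q>0$ terms via the vanishing of higher cohomology of $\calO$ on complete toric varieties. The only cosmetic difference is that you quote Demazure's vanishing for the structure sheaf directly and spell out that the intersections $D_{i_0\ldots i_p}$ are orbit closures (hence smooth projective toric), whereas the paper invokes the vanishing for globally generated line bundles and leaves the toric identification implicit.
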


\begin{proof}
It is well-known that for a globally generated line bundle on a simplicial complete toric variety, the higher cohomology groups vanish 
(see \cite{ful} Section 3.5).  Thus the presheafs $\calH^q(\calO)$ are $0$ for $q>0$ since $H^q(D_{i_0...i_p},\calO_{D_{i_0...i_p}})=0$ 
for $q>0$, and we are done by Theorem 5.1 since we saw that $\calH^0(\calO) \cong \CC$ in Example 2.4.
\end{proof}

\begin{rem}
More generally, on a smooth projective complex variety $X$ (not necessarily toric) suppose that we have an SNC divisor $D$ such that 
$H^q(D_{i_0...i_p},\calO_{D_{i_0,...i_p}})=0$ for every $q>0$ and each $i_0<...<i_p$.  Then Theorem 5.1 again immediately gives that 
\[H^i(D,\calO_D) \cong H^i(\D(D),\CC).\]
Hence invariants such as the Euler characteristic coincide, i.e. $\chi(\calO_D) = \chi(\D(D))$.
\end{rem}

\newpage

\subsection{Rational Singularities}
Our starting point in this section is the following result of Stepanov, generalized to arbitrary singular loci over a perfect field by 
A. Thuillier in \cite{th}.

\begin{thm}[Stepanov]
The homotopy type of the dual complex $\D(D)$ of an SNC divisor $D$ associated to a resolution of an isolated singularity $o \in Y$ is 
independent of the choice of the resolution.  That is to say, the homotopy type of $\D(D)$ depends only on the isolated singularity 
$o$ and the ambient variety $Y$.
\end{thm}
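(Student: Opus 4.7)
The plan is to reduce, via the weak factorization theorem of W{\l}odarczyk and Abramovich--Karu--Matsuki--W{\l}odarczyk \cite{W1,W2,akmw}, to a purely local analysis of how $\D(D)$ changes under a single smooth blow-up. Given two resolutions $\pi_1 : X_1 \ra Y$ and $\pi_2 : X_2 \ra Y$ of the isolated singularity $o \in Y$ with exceptional SNC divisors $D^{(1)}$ and $D^{(2)}$, the rational map $\pi_2^{-1}\circ \pi_1$ is an isomorphism away from $\pi_1^{-1}(o)$, so weak factorization produces a chain of birational maps $X_1 = Y_0 \dashrightarrow Y_1 \dashrightarrow \cdots \dashrightarrow Y_N = X_2$ in which each step is either a blow-up or a blow-down along a smooth center having SNC with the exceptional locus, and each step is an isomorphism outside the preimage of $o$. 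It therefore suffices to prove that whenever $f : X' \ra X$ is the blow-up of a smooth variety $X$ along a smooth center $Z$ having SNC with a reduced SNC divisor $D \subset X$, the dual complexes $\D(D)$ and $\D(D')$ are homotopy equivalent, where $D'$ is the reduced total transform $f^{-1}(D \cup Z)_{\mathrm{red}}$.

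Next I would analyze this blow-up combinatorially. Let $E \subset X'$ denote the exceptional divisor and $\tilde D_i$ the strict transforms of the $D_i$. The only new prime component is $E$; by standard properties of smooth blow-ups, $E \cong \PP(N_{Z/X})$ is a projective bundle over $Z$, and the SNC hypothesis on the pair $(D,Z)$ ensures that $E$ meets each $\tilde D_i$ transversally, with $E \cap \tilde D_{j_0} \cap \cdots \cap \tilde D_{j_q}$ non-empty precisely when $Z$ meets the stratum $D_{j_0 \cdots j_q}$ in the expected codimension. The simplicial effect on $\D$ then divides into two cases. When $Z$ itself equals a stratum $D_{i_0 \cdots i_p}$, the new dual complex $\D(D')$ is the stellar subdivision of $\D(D)$ at the simplex $\D_{i_0 \cdots i_p}$: a new vertex $v_E$ is inserted in its interior and joined to precisely those faces whose associated strata contain $Z$, and such subdivisions are well known to preserve homotopy type (they even give a PL-homeomorphism). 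When instead $Z$ is a proper smooth subvariety of some stratum, the locus in $\D(D)$ along which the new vertex $v_E$ is attached turns out to be the closed star of the simplex $\sigma_Z$ corresponding to the smallest stratum containing $Z$; this closed star is a cone with apex the barycenter of $\sigma_Z$, hence contractible, and attaching the cone $v_E * K$ on a contractible subcomplex $K$ is a homotopy equivalence.

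Concatenating these homotopy equivalences along the weak factorization chain yields $\D(D^{(1)}) \simeq \D(D^{(2)})$. The main obstacle will be the second case, where $Z$ is not itself a stratum: one must carefully identify which strata of $D$ meet $Z$ in the appropriate codimension, verify that the subcomplex of $\D(D)$ to which $v_E$ is attached is always contractible, and check that no spurious higher-dimensional simplices are introduced in $\D(D')$. This requires choosing local analytic coordinates adapted simultaneously to $D$ and $Z$ using the SNC hypothesis on $(D,Z)$, reducing the blow-up to a standard monomial form, and then verifying at the combinatorial level that $\D(D')$ deformation retracts onto $\D(D)$. Once the local combinatorics of both cases is settled, the theorem follows immediately from the weak factorization reduction.
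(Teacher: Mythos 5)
Your proposal follows the same reduction strategy that the paper outlines and attributes to Stepanov's paper \cite{S1}: invoke the weak factorization theorem to reduce to a single smooth blow-up whose center has simple normal crossings with the exceptional locus, then show such a blow-up does not change the homotopy type of the dual complex. Your case distinction is the right one: when $Z$ equals a stratum $D_J$, the effect on $\D$ is a stellar subdivision; when $Z$ is a proper smooth subvariety of the minimal stratum $D_J$ containing it, the effect is to attach a cone $v_E \ast K$ over some subcomplex $K$ of $\D(D)$.

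However, your identification of $K$ with the closed star of $\sigma_J$ is incorrect. The actual attaching subcomplex is $K = \{\tau = \{j_0, \dots, j_q\} : Z \cap D_{j_0 \cdots j_q} \neq \varnothing\}$, which in general is a \emph{proper} subcomplex of the closed star of $\sigma_J$. For a local counterexample take $X = \AA^3$, $D$ the sum of the three coordinate hyperplanes, and $Z$ the point $(0,0,1)$. Then $J=\{1,2\}$ and the closed star of $\sigma_{12}$ is the entire $2$-simplex $\D(D)$, whereas $K$ is only the edge $\{v_1,v_2\}$, since $Z \cap D_3 = \varnothing$; using the closed star would wrongly produce a $3$-simplex in $\D(D')$, i.e.\ a nonexistent fourfold intersection on the blow-up of a threefold. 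The argument is nevertheless repairable because the correct $K$ is still contractible: for any $\tau \in K$ and any $i \in J$ one has $Z \cap D_{\tau \cup \{i\}} = Z \cap D_\tau \neq \varnothing$ (as $Z \subset D_i$), so every simplex of $K$ extends inside $K$ to one containing $v_i$, which makes $K$ a cone with apex $v_i$. With this correction, attaching $v_E \ast K$ along the contractible $K$ remains a homotopy equivalence and the rest of your outline goes through.
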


\begin{proof}
The proof uses the weak factorization theorem of birational maps to reduce to the case of a blowup.  See \cite{S1} for details.
\end{proof}

Notice that by taking additional blowups, we can always ensure the irreducibility of each intersection $D_{i_0...i_p}$ without affecting the homotopy type of $\D(D)$.  Such a resolution is called {\em{good}}.  As we mentioned in the introduction, Stepanov has shown the contractibility of $\D$ for various kinds of isolated singularities: toric, Brieskorn, $3$-dimensional terminal, $3$-dimensional rational hypersurface; see \cite{S1}, \cite{S2}, and \cite{S3} for details.  Stepanov asks if $\D(D)$ is contractible for all isolated rational singularities of dimension $\geq 3$.  We conjecture a weaker, but more plausible assertion.

\begin{conj}
Suppose $f: X \rightarrow Y$ is a good resolution of an isolated rational singularity $o \in Y$ so that the exceptional divisor 
$D_{(r)}=\sum r_iD_i$ is effective with SNC support.  Then $H^i(\D,\CC)=0$ for all $i>0$.
\end{conj}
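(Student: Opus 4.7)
The plan is, granting Conjecture 5.3, to apply its formula to a suitably chosen thickening $D_{(r)}$ of the exceptional divisor and extract the desired vanishing via a canonical splitting of the presheaf $\calH^0(\calO_{(r)})$. Since $o \in Y$ is rational, $R^i f_* \calO_X = 0$ for $i > 0$, and a standard consequence---via the theorem on formal functions combined with the coherence of $R^i f_* \calO_X$ and a Mittag--Leffler/base-change argument on the inverse system of finite-dimensional $\CC$-vector spaces $\{H^i(D_{(r)},\calO_{D_{(r)}})\}_r$---is the existence of a multiplicity vector $r = (r_1,...,r_N)$ with $H^i(D_{(r)},\calO_{D_{(r)}}) = 0$ for every $i > 0$.

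Applying Conjecture 5.3 to this $D_{(r)}$ then yields
\[ 0 = H^i(D_{(r)},\calO_{D_{(r)}}) \cong \bigoplus_{p+q=i} H^p(\D,\calH^q(\calO_{(r)})) \]
for $i > 0$, forcing every summand to vanish. In particular, $H^p(\D,\calH^0(\calO_{(r)})) = 0$ for all $p > 0$.

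To reduce from $\calH^0(\calO_{(r)})$ to the constant presheaf $\CC$, I would invoke the following canonical splitting. On each simplex $\D_{i_0...i_p}$ there is a natural composition
\[ \CC \hookrightarrow H^0(D^{(r)}_{i_0...i_p},\calO) \twoheadrightarrow H^0(D_{i_0...i_p},\calO) \cong \CC, \]
where the first map sends a scalar to the corresponding constant function and the second is restriction to the reduced subscheme $D_{i_0...i_p}$ (smooth, projective, and irreducible, hence with $H^0 \cong \CC$). The composition is the identity, and both constituent maps commute with the presheaf restriction morphisms, yielding a canonical splitting of presheaves
\[ \calH^0(\calO_{(r)}) \cong \CC \oplus \calN, \]
where $\calN$ is the subpresheaf of nilpotent sections. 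Since simplicial cohomology commutes with direct sums of presheaves,
\[ 0 = H^p(\D,\calH^0(\calO_{(r)})) \cong H^p(\D,\CC) \oplus H^p(\D,\calN) \]
for $p > 0$, so in particular $H^p(\D,\CC) = 0$ for all $p > 0$, as desired.

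The principal obstacle is of course Conjecture 5.3 itself: as the paper emphasizes, no $\partial\bar\partial$-type lemma is available in the non-reduced setting to force second-page degeneration of the spectral sequence from Section 4, which is what makes that conjecture genuinely hard. Once it is granted, the remaining steps---producing $r$ via formal functions at the beginning, and splitting off the constants at the end---are essentially formal.
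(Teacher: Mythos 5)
Your proposal follows the paper's strategy closely: in both, one splits a canonical copy of the constant presheaf $\CC$ off $\calH^0(\calO_{(r)})$, uses Conjecture 5.3 to embed $H^p(\D,\CC)$ into $H^p(D_{(r)},\calO_{D_{(r)}})$, and invokes rationality to make the target vanish. The paper realizes the splitting via the short exact sequence $0 \to \calI^{(r)}_{i_0\dots i_p} \to \calO_{D^{(r)}_{i_0\dots i_p}} \to \calO_{D_{i_0\dots i_p}} \to 0$ and the splitting of a SES of vector spaces; you make the naturality more transparent by writing the splitting as $\CC \into H^0(\calO_{(r)}) \onto H^0(\calO_{\mathrm{red}}) \cong \CC$, which is manifestly compatible with the presheaf restrictions. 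That part is correct and, if anything, cleaner than the original. (The paper also observes in Remark 5.8 that only the injection $H^i(\D,\CC)\into H^i(D_{(r)},\calO_{D_{(r)}})$ from Conjecture 5.3 is needed, whereas you invoke the full direct-sum decomposition; this is fine, just slightly more than necessary.)

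The genuine gap is in the formal-functions step. The theorem on formal functions together with rationality gives $\varprojlim_r H^i(D_{(r)},\calO_{D_{(r)}}) = 0$ (after identifying $(R^if_*\calO_X)_o$ with its completion, since it is a finite-length module). Mittag--Leffler for finite-dimensional vector spaces then tells you that the \emph{stable images} $\bigcap_{r'\geq r}\im\bigl(H^i(D_{(r')}) \to H^i(D_{(r)})\bigr)$ vanish; it does \emph{not} give the existence of a single $r$ with $H^i(D_{(r)},\calO_{D_{(r)}})=0$. (A tower $\CC \leftarrow \CC \leftarrow \cdots$ with zero transition maps has zero limit, is ML, yet no term vanishes.) The fix is immediate, however, and makes the argument strictly cleaner: the $\CC$-summand you split off is canonical and commutes with the transition maps $\calO_{D_{(r')}} \to \calO_{D_{(r)}}$, so the embedding $H^p(\D,\CC)\into H^p(D_{(r)},\calO_{D_{(r)}})$ is compatible with the inverse system, hence $H^p(\D,\CC)$ injects into $\varprojlim_r H^p(D_{(r)},\calO_{D_{(r)}})=0$. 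Equivalently, $H^p(\D,\CC)$ lands inside each stable image, which Mittag--Leffler shows is $0$. Either phrasing removes the need to produce a privileged $r$. (It is worth noting that the paper's own line $H^i(D_{(r)},\calO_{D_{(r)}})\cong (R^if_*\calO_X)_o$ is similarly cavalier about this point; the inverse-limit argument is the honest way to close it.)
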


We show that Conjecture 5.3 implies Conjecture 5.7. For any $i_0<...<i_p$ we have a short exact sequence 
(as coherent sheaves on $D_{i_0...i_p}^{(r)}$)
\[ 0 \ra \calI_{i_0...i_p}^{(r)} \ra \calO_{D_{i_0...i_p}^{(r)}} \ra \calO_{D_{i_0...i_p}} \ra 0\]

where $\calI_{i_0...i_p}^{(r)} = \displaystyle \frac {(\calI_{i_0} + ... + \calI_{i_p})} {(\calI_{i_0}^{r_{i_0}} + ... + \calI_{i_p}^{r_{i_p}})}$ is 
the ideal defining $D_{i_0...i_p}$ in $D_{i_0...i_p}^{(r)}$.  Taking global sections we get another short exact sequence

\[ 0 \ra H^0(\calI_{i_0...i_p}^{(r)}) \ra H^0(\calO_{D_{i_0...i_p}^{(r)}}) \ra \CC \ra 0\]

where the last restriction map is surjective because the middle term is a nonzero complex vector space (it contains the constant functions) and the second map is restriction.  But a short exact sequence of vector spaces splits, hence we have

\[ H^0(\calO_{D_{i_0...i_p}^{(r)}}) \cong \CC \oplus H^0(\calI_{i_0...i_p}^{(r)}) \]

Since we get such natural isomorphisms for every $i_0<...<i_p$, we conclude that

\[ \calH^0(\calO_{D_{(r)}}) \cong \CC \oplus \calH^0(\calI^{(r)}) \]

as presheaves on $\D$, where $\calH^0(\calI^{(r)})(\D_{i_0...i_p}) = H^0(D_{i_0...i_p}^{(r)},\calI_{i_0...i_p}^{(r)})$.  It follows that we have 
an isomorphism on the level of cohomology

\[ H^i(\D,\calH^0(\calO_{D_{(r)}})) \cong H^i(\D,\CC) \oplus H^i(\D,\calH^0(\calI^{(r)})) \]

so that we have inclusions

\[ H^i(\D,\CC) \into H^i(\D,\calH^0(\calO_{(r)})) \into H^i(D_{(r)},\calO_{D_{(r)}}) \cong (R^if_*\calO_X)_o \]

and the last vector space is $0$ since our singularity was assumed to be rational.

\begin{rem}
Notice that we do not need the entirety of Conjecture 5.3 to prove Conjecture 5.7 in the argument given above.  It suffices to have an 
inclusion \[ H^i(\D,\CC) \into H^i(D_{(r)}, \calO_{D_{(r)}}) \cong (R^if_*\calO_X)_o \]
of the combinatorial part of the cohomology into the cohomology of the structure sheaf of $D_{(r)}$.
\end{rem}

We state the following interesting result of Stepanov.
\begin{thm}[Stepanov]
If $\D=\D(D)$ is the dual complex associated to a good resolution of an isolated hypersurface singularity of dimension $\geq 3$, 
then $\pi_1(\D)=0$, i.e $\D$ is simply connected.
\end{thm}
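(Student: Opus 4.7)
The plan is to deduce $\pi_1(\D)=1$ from the classical fact that the link of an isolated hypersurface singularity of complex dimension $n\geq 3$ is simply connected. The key input is Milnor's fibration theorem: for an isolated hypersurface singularity $o\in Y\subset\CC^{n+1}$, the link $L=Y\cap S^{2n+1}_\epsilon$ is $(n-2)$-connected, and in particular $\pi_1(L)=1$ when $n\geq 3$. The theorem then reduces to producing a surjection $\phi_*:\pi_1(L)\onto\pi_1(\D)$.

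To build this map, let $V=f^{-1}(Y\cap B_\epsilon)\subset X$ be a small tubular neighborhood of $D$, so that $V$ deformation retracts onto $D$ and $\partial V\simeq L$. Cover $V$ by tubular neighborhoods $V_i$ of the components $D_i$. Because the resolution is good and each intersection $D_{i_0\cdots i_p}$ is irreducible, the nerve of this covering is exactly $\D(D)$, and a partition of unity $\{\rho_i\}$ subordinate to $\{V_i\}$ defines the associated nerve map $\phi:V\ra\D(D)$ by $\phi(x)=\sum_i\rho_i(x)\,v_i$, with $v_i$ the vertex attached to $D_i$. Restriction gives $\phi|_{\partial V}:L\ra\D(D)$.

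The heart of the argument is showing $\phi_*:\pi_1(L)\ra\pi_1(\D)$ is surjective. Any element of $\pi_1(\D)$ is represented by an edge loop $v_{i_1}v_{i_2}\cdots v_{i_k}v_{i_1}$, and I would lift it piecewise. Over each punctured component $D_{i_j}^\circ:=D_{i_j}\setminus\bigcup_{l\neq i_j}D_l$, the link $L$ restricts to an $S^1$-bundle (the meridian of $D_{i_j}$); choose a path in $D_{i_j}^\circ$ from the intersection locus with $D_{i_{j-1}}$ to the one with $D_{i_{j+1}}$ and lift it via a section, obtaining an arc in $L$ whose $\phi$-image is the constant $v_{i_j}$. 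At each corner $D_{i_j}\cap D_{i_{j+1}}$, use SNC coordinates with $D_{i_j}=\{z_1=0\}$ and $D_{i_{j+1}}=\{z_2=0\}$: a transition arc on $|z_1|^2+|z_2|^2=\epsilon^2$ with fixed phases interpolates between the two meridians, and $\phi$ sends it to the edge $[v_{i_j}v_{i_{j+1}}]$ because $(\rho_{i_j},\rho_{i_{j+1}})$ transitions from $(1,0)$ to $(0,1)$ along the arc. Concatenating these pieces yields a loop in $L$ whose image is the given edge loop, proving surjectivity.

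The main obstacle I anticipate is the path-connectedness required in each $D_{i_j}^\circ$: the base path must avoid other components except at its prescribed endpoints. Since each $D_{i_j}\cap D_l$ has complex codimension $1$ in $D_{i_j}$ (real codimension $2$) and $D_{i_j}$ is irreducible of complex dimension $n-1\geq 2$, the punctured component $D_{i_j}^\circ$ is path-connected, which suffices. Once $\phi_*$ is surjective, $\pi_1(L)=1$ forces $\pi_1(\D)=1$. The same strategy---lifting $2$-disks instead of loops and exploiting the full $(n-2)$-connectedness of $L$---ought to yield higher connectivity results, though simple connectedness is all that is claimed here.
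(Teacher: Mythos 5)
Your argument is correct and uses the same essential input as Stepanov's proof (the link $L$ of an isolated hypersurface singularity of dimension $n\geq 3$ is $(n-2)$-connected, hence simply connected), but the route to $\pi_1(\D)$ is organized differently. Stepanov factors through the exceptional divisor itself: he views $L$ as $\partial V$ for a tubular neighborhood $V$ of $D$, uses the retraction $L\to D$ (a surjection with torus fibers) to conclude $\pi_1(D)=0$, and then separately argues that $\pi_1(D)=0$ forces $\pi_1(\D(D))=0$. You instead build a single nerve map $\phi\colon L\to\D(D)$ via a partition of unity subordinate to a cover by tubes $V_i$, and prove $\phi_*$ is $\pi_1$-surjective directly by lifting edge loops into the plumbing structure of $L$, replacing the two-step factorization $L\to D\to\D$ by a one-step lifting argument. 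The benefit of your version is that it isolates exactly the geometric input needed (path-connectedness of the punctured strata $D_{i_j}^\circ$, which you correctly justify via the real-codimension-$2$ count, and the local $S^1$/$T^2$ structure of $L$ at crossings) and sidesteps having to justify why a proper surjection with connected but varying torus fibers induces a $\pi_1$-surjection. Two small points to be careful about if you write this up fully: the nerve of your cover $\{V_i\}$ equals $\D(D)$ only for tubes chosen small enough that $V_{i_0}\cap\dots\cap V_{i_p}\neq\varnothing$ iff $D_{i_0\cdots i_p}\neq\varnothing$, and one should fix a basepoint and check that the pieces you build actually concatenate to a genuine loop in $L$ with endpoints matching at each corner; both are routine but worth stating.
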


\begin{proof}
See \cite{S2}, Theorem 3.1.  The idea is to consider the link $M$ of the isolated singularity, which is simply connected, and think of 
$M$ as the border of a tubular neighborhood of the exceptional divisor $D$, which gives a surjective map $\varphi: M \ra D$ whose fibers are tori.  One concludes that $\pi_1(D)=0$ and then argues that this implies that $\pi_1(\D(D))=0$.
\end{proof}

Unfortunately, even if Conjecture 5.7 were true, we still would not know the cohomology of $\D$ with coefficients in $\ZZ$ so we could not conclude that $\D$ is contractible in the case of an isolated rational hypersurface singularity of dimension $\geq 3$. But, we could say this of the rationalization of $\D$.  Roughly speaking, the rationalization of a simply connected CW complex $\D$ is a canonically constructed simply connected CW complex $\D_0$, unique up to homotopy, with a map $\D \ra \D_0$ that induces isomorphisms on the level of homology and homotopy groups after tensoring those of $\D$ with $\QQ$.  See \cite{GM} for a discussion of localization and rationalization of a simply connected CW-complex.

\begin{conj}
If $\D=\D(D)$ is the dual complex associated to a resolution of an isolated rational hypersurface singularity of dimension 
$\geq 3$, then the rationalization of $\D$ is contractible.
\end{conj}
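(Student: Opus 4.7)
The plan is to derive Conjecture 5.9 by combining Conjecture 5.7 (or merely the weaker injection posited in Remark 5.8) with Stepanov's Theorem 5.8, and then invoking standard facts about Sullivan--Quillen rationalization of simply connected CW complexes.

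First, from Conjecture 5.7 applied to our rational singularity we obtain $H^i(\D,\CC) = 0$ for all $i > 0$ (alternatively, the injection $H^i(\D,\CC) \into (R^if_*\calO_X)_o$ in Remark 5.8 together with rationality of $o$ suffices). Since $\D$ is a finite simplicial complex, cohomology commutes with flat base change of the coefficient field, so $H^i(\D,\CC) \cong H^i(\D,\QQ) \otimes_{\QQ} \CC$; hence $H^i(\D,\QQ) = 0$ for $i>0$, and dually $H_i(\D,\QQ) = 0$ for $i>0$ by the universal coefficient theorem.

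Next, Stepanov's Theorem 5.8 applies because we are in the isolated hypersurface singularity setting of dimension $\geq 3$, giving $\pi_1(\D) = 0$. This simple connectedness is exactly the hypothesis needed to form the rationalization $\D_0$ in the sense of \cite{GM}. By the defining property of rationalization, the canonical map $\D \to \D_0$ induces isomorphisms $H_i(\D,\ZZ) \otimes \QQ \xrightarrow{\sim} H_i(\D_0,\ZZ)$ for every $i$, and $\D_0$ remains simply connected. Combining this with the previous paragraph yields $H_i(\D_0,\ZZ) = 0$ for all $i > 0$ and $\pi_1(\D_0) = 0$.

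Finally, the conclusion follows by an inductive application of the Hurewicz theorem: since $\D_0$ is simply connected and has vanishing reduced integral homology, $\pi_n(\D_0) \cong H_n(\D_0,\ZZ) = 0$ for every $n \geq 2$, so $\D_0$ is weakly contractible. As $\D_0$ has the homotopy type of a CW complex, Whitehead's theorem applied to the map $\D_0 \to \mathrm{pt}$ promotes this to an honest homotopy equivalence, so $\D_0$ is contractible. The principal obstacle is of course supplying the input from Conjecture 5.7 (or its variant in Remark 5.8); granting that, everything else is routine algebraic topology of simply connected spaces.
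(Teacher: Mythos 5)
Your proposal is correct and is essentially a more detailed write-up of the paper's own one-sentence sketch: both arguments reduce the claim to Conjecture 5.7 (vanishing of $H^i(\D,\CC)$ for $i>0$), invoke Stepanov's simple-connectedness theorem for hypersurface singularities, and then conclude via rationalization, Hurewicz, and Whitehead. Since Conjecture 5.7 remains open, this is a conditional implication rather than a proof, which the paper and your write-up both acknowledge.
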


We have that $\pi_1(\D)=0$ from the above theorem.  Conjecture 5.7 would give $H^i(\D,\CC)=0$ for $i>0$, so the result would follow from the Hurewicz and Whitehead theorems, and the theory of localization.

\subsection{Examples}
We finish with some examples to illustrate the theory.

\begin{ex}
Let $X=\PP^n_{\CC}$ be complex projective $n$-space and let $D=\displaystyle \sum_{i=1}^{n+1} H_i$, where the $H_i$ are the coordinate hyperplanes.  Then $\D(D) \approx S^{n-1}$, and we know that $H^q(D_{i_0...i_p}, \calO_{D_{i_0...i_p}})$ is isomorphic to $\CC$ when $q=0$ and is $0$ for $q>0$, since $D_{i_0...i_p} \cong \PP^{n-1-p}$.  Then Theorem 5.1 (or Corollary 5.4) implies that $H^i(D,\calO_D) \cong H^i(S^{n-1},\CC)$, which we know is $\CC$ for $i=0,n-1$ and $0$ else.  
\end{ex}

\begin{ex}
Building on the previous example, let $X=X(\Sigma)$ be a smooth complex projective toric variety of dimension $n \geq 2$, and let $D=\sum D_i$ be the SNC sum of all the torus invariant divisors corresponding to edges of the fan $\Sigma$, so that $\omega_X \cong \calO_X(-D)$.  By Corollary 5.4 we have $H^0(\D(D),\CC) \cong H^0(D,\calO_D) \cong \CC$ and we can compute the higher cohomologies of $\calO_D$ by taking cohomology of the short exact sequence 
\[ 0 \ra \omega_X \ra \calO_X \ra \calO_D \ra 0 \]
and using that $H^i(X,\calO_X)=0$ for $i>0$ to conclude $H^i(\D(D),\CC) \cong H^i(D,\calO_D) \cong H^{i+1}(X,\omega_X) \cong H^{n-1-i}(X,\calO_X)$ for $i>0$, which is precisely $\CC$ when $i=n-1$ and $0$ else, confirming the observation that $\D(D) \approx S^{n-1}$ here as well.
\end{ex}

\begin{ex}[Euler characteristic]
A consequence of the exact sequence of Lemma 3.1 on the level of Euler characteristics is

\[ \chi(\calO_D)= \sum_{p \geq 0}(-1)^p (\sum_{i_0<...<i_p} \chi(\calO_{D_{i_0...i_p}})) \]
and we can calculate each $\chi(\calO_{D_{i_0...i_p}}))$ from the Hirzebruch-Riemann-Roch theorem.  For instance, if 
$C=\displaystyle \sum_{i=1}^N C_i$ is a SNC curve on a surface $X$ with each $C_i \cap C_j$ either empty or a single point for $i<j$, 
then the Riemann-Roch theorem gives us that 
\[ \chi(\calO_{C_i}) = 1 - g(C_i). \]
Letting $e:= |C_i \cap C_j \neq \varnothing|$ be the number of edges of $\D=\D(C)$ and noting $N$ is the number of vertices of $\D$, 
our formula above becomes
\[ \chi(\calO_C)= \sum_{i=1}^N \chi(\calO_{C_i}) - \sum_{j=1}^e \chi(\calO_{pt}) = N - e - \sum_{i=1}^N g(C_i) = \chi(\D) - \sum_{i=1}^N g(C_i) \]
which is a natural generalization to the SNC case. 

\end{ex}

\newpage


\begin{thebibliography}{99}

\bibitem{akmw} Abramovich, D., Karu K., Matsuki, K., W{\l}odarczyk J. {\em{Torification and factorization of birational maps}}, 
J. Amer. Math. Soc. \textbf{15} (2002), 531-572.

\bibitem{art} Artin, M. {\em{On isolated rational singularities of surfaces}}, Amer. J. Math. \textbf{88} (1966), 129-136.

\bibitem{carl} Carlson, J. {\em{Polyhedral resolutions of algebraic varieties}}, Trans. Amer. Math. Soc. \textbf{292} (1985), no. 2, 595-612.

\bibitem{del} Deligne, P. {\em{Th\'{e}orie de Hodge: III}}, (French) Inst. Hautes \'{E}tudes Sci. Publ. Math. No. 44 (1974), 5-77.

\bibitem{del1} Deligne, P., Griffiths, P., Morgan, J., Sullivan, D. {\em{Real homotopy theory of K\"ahler manifolds}}, Invent. Math. 
\textbf{29} (1975), no. 3, 245-274.

\bibitem{elz} El Zein, F. {\em{La classe fondamentale d'un cycle}}, (French) Compositio Math. \textbf{29} (1974), 9-33.

\bibitem{fr} Friedman, R. {\em{Global smoothings of varieties with normal crossings}},  Ann. of Math. (2)  \textbf{118}  (1983),  no. 1, 75-114.

\bibitem{ful} Fulton, W. {\em{Introduction to Toric Varieties}}, Princeton University Press, 1993.

\bibitem{G1} Gordon, G. L. {\em{On a simplicial complex associated to monodromy}}, Trans. Amer. Math. Soc. \textbf{261} (1980), 93-101.

\bibitem{G2} Gordon, G. L. {\em{On the degeneracy of a spectral sequence associated to normal crossings}}, Pacific J. Math. \textbf{90} (2) (1980), 389-396.

\bibitem{GH} Griffiths, P., Harris, J. {\em{Principles of Algebraic Geometry}}, v.1, John Wiley and Sons, New York, 1978.

\bibitem{GM} Griffiths, P., Morgan, J. {\em{Rational Homotopy Theory and Differential Forms}}, Birkh\"auser, Boston, 1981.

\bibitem{hiro} Hironaka, H. {\em{Resolution of singularities of an algebraic variety over a field of characteristic zero}}, Annals of Math. \textbf{79} (1964), 109-326.

\bibitem{KK} Kulikov, V. S., Kurchanov, P. F. {\em{Complex algebraic varieties: periods of integrals and Hodge structures}}, Algebraic Geometry, III, 1-217. Encyclopaedia Math. Sci. 36, Springer, Berlin, 1998.

\bibitem{St} Steenbrink, J. H. M. {\em{Mixed Hodge structures associated with isolated singularities}}, Singularities, Part 2 (Arcata, Calif., 1981),  513--536, Proc. Sympos. Pure Math., 40, Amer. Math. Soc., Providence, RI, 1983. 

\bibitem{S1} Stepanov, D. A. {\em{A note on the dual complex associated to a resolution of singularities}}, (Russian) Uspeki Math. Nauk. 61, no. 1 (367), 185-186; English translation in Russian Math. Surveys \textbf{61} (2006), no. 1, 181-183.

\bibitem{S2} Stepanov, D. A. {\em{A note on resolution of rational and hypersurface singularities}}, Proc. Amer. Math. Soc. \textbf{136} (2008), no. 8, 2647-2654.

\bibitem{S3} Stepanov, D. A. {\em{Combinatorial structure of exceptional sets in resolution of singularities}} (preprint) arXiv:math.AG/0611903.

\bibitem{th} Thuillier, A. {\em{G\'{e}om\'{e}trie toro\"{i}dale et g\'{e}om\'{e}trie analytique non archim\'{e}dienne. Application au type d'homotopie de certains sch\'{e}mas formels}}, (French) Manuscripta Math. \textbf{123} (2007), no. 4, 381-451.
 
\bibitem{V} Voison, C. {\em{Hodge Theory and Complex Algebraic Geometry, I}}, Cambridge University Press, New York, 2002.

\bibitem{W} Weibel, C. {\em{An Introduction to Homological Algebra}}, Cambridge University Press, New York, 1994.

\bibitem{W1} W{\l}odarczyk, J. {\em{Toroidal varieties and the weak factorization theorem}}, Invent. Math. 134 (2003), no. 2, 223-331.

\bibitem{W2} W{\l}odarczyk, J. {\em{Birational cobordisms and factorization of birational maps}}, J. Algebraic Geom. 9 (2000), no. 3, 
425-449.

\end{thebibliography}
\end{document}